\theoremstyle{plain}
\newtheorem{theorem}{Theorem}[section]
\newtheorem{corollary}{Corollary}
\newtheorem{lemma}[theorem]{Lemma}
\newtheorem{proposition}{Proposition}
\theoremstyle{definition}
\newtheorem{remark}{Remark}
\newcommand{\vertiii}[1]{{\left\vert\kern-0.25ex\left\vert\kern-0.25ex\left\vert #1 
    \right\vert\kern-0.25ex\right\vert\kern-0.25ex\right\vert}}
\def \d {\mathrm{d}}
\def \va {\varphi}
\def \la {\lambda}
\def \ppp {\partial}
\def \OOO {\Omega}
\def \N {\mathbb{N}}
\title[The backward problem for time fractional] 
      {The backward problem for time fractional evolution equations}
\author{S. E. Chorfi}
\author{L. Maniar}
\author{M. Yamamoto}
\address{S. E. Chorfi, L. Maniar, Faculty of Sciences Semlalia, LMDP, UMMISCO (IRD-UPMC), B.P. 2390, Marrakesh, Morocco}
\email{s.chorfi@uca.ac.ma, maniar@uca.ma}
\address{M. Yamamoto, Graduate School of Mathematical Sciences, The University of Tokyo, Komaba, Meguro, Tokyo 153-8914, Japan}
\address{Honorary Member of Academy of Romanian Scientists, 
Ilfov, nr. 3, Bucuresti, Romania}
\address{Correspondence member of Accademia Peloritana dei Pericolanti\\
Palazzo Universit\`a, Piazza S. Pugliatti 1 98122 Messina Italy}
\email{myama@ms.u-tokyo.ac.jp}
\subjclass[2020]{35R11, 35R30, 26A33}
 \keywords{Fractional evolution equation, backward problem, logarithmic convexity, H\"older stability}
\begin{document}
\begin{abstract}
In this paper, we consider the backward problem for fractional in time evolution equations $\partial_t^\alpha u(t)= A u(t)$ with the Caputo derivative of order $0<\alpha \le 1$, where $A$ is a self-adjoint and bounded above operator on a Hilbert space $H$. First, we extend the logarithmic convexity technique to the fractional framework by analyzing the properties of the Mittag-Leffler functions. Then we prove conditional stability estimates of H\"older type for initial conditions under a weaker norm of the final data. Finally, we give several applications to show the applicability of our abstract results.
\end{abstract}

\maketitle

\section{Introduction and main results}
Backward problems for evolution equations have many applications in science and engineering. It mainly aims at reconstructing a past profile from the present measured data of a given dynamical system. Such problems are known to be ill-posed in general, and have been extensively investigated for classical evolution equations with derivatives of integer order, see for instance \cite{AN'63, Is'17, Pa'75} and the references therein.

Recently, backward and inverse problems for time fractional evolution equations of order $0<\alpha <1$ have attracted more attention. This class of equations has proved powerful in modeling, e.g., slow and anomalous diffusion phenomena in heterogeneous or porous media \cite{AG'92, OS'74}. 

It turned out that fractional backward problems exhibit an essential difference from the classical case $\alpha=1$ in terms of their well-posedness and also the techniques used.
Needless to say, the well-posedness or the ill-posedness of the backward problems are concerned with the essence of the diffusion processes under consideration. For example, in the case where the backward problem is well-posed, it is suggested that diffusion profiles are well preserved. On the other hand, the backward problem in the case of $\alpha=1$ is severely ill-posed and the equation possesses strong smoothing property in time, and so the recovery of initial value is 
extremely difficult by the state at positive time even if time is small.

In spite of the importance, regarding the literature, general results 
on the uniqueness and stability for fractional backward problems are 
quite missing, although researches grow up rapidly. This is why we consider here an abstract framework that covers and generalizes many examples of applications.

Let $(H,\langle\cdot, \cdot\rangle)$ be a separable Hilbert space and $\|\cdot\|$ its associated norm. Let $0 < \alpha \le 1$ and $T>0$ be a fixed positive time. We are concerned with the following abstract backward problem
\begin{empheq}[left = \empheqlbrace]{alignat=2}
\begin{aligned}
&\partial_{t}^\alpha u(t) = A u(t), && \qquad  t\in (0, T), \\
& u(T)=u_T,
\end{aligned} \label{eq1}
\end{empheq}
where $A: D(A) \subset H \rightarrow H$ is a densely defined linear operator such that:\\
\textbf{Assumption I}: 
\begin{itemize}
    \item[(i)] $A$ is self-adjoint,
    \item[(ii)] $A$ is bounded above: there exists $\kappa \ge 0$ such that 
$\langle A u, u\rangle \leq \kappa \|u\|^2$ for all $u \in D(A)$,
    \item[(iii)] $A$ has compact resolvent: there exists $\lambda> -\kappa$ 
such that the resolvent $R(\lambda, A)=(\lambda I-A)^{-1}$ is compact.
\end{itemize}

The Caputo derivative $\partial_{t}^\alpha g$ is defined by
\begin{equation}\label{cap}
    \partial_t^{\alpha} g(t) = \begin{cases}\displaystyle \frac{1}{\Gamma(1-\alpha)} \int^t_0 (t-s)^{-\alpha} \frac{\d }{\d s}g(s) \, \d s, & 0<\alpha<1,\\
    \dfrac{\d }{\d t}g(t), & \alpha=1,
    \end{cases}
\end{equation}
as long as the right-hand side is defined.  Here 
$\Gamma$ is the standard Gamma function. 

First, we state a main ingredient to prove some stability results for the backward problem \eqref{eq1}.
\begin{theorem} \label{thm1}
Let $0 < \alpha \le 1$.
Let $u_T\in H$ and $u$ be the solution to \eqref{eq1}. Then there exists a constant $K\ge 1$ such that
\begin{equation}\label{lceq1}
\|u(t)\| \le K \|u(0)\|^{1-\frac{t}{T}} \|u(T)\|^{\frac{t}{T}}, \qquad 
0\le t \le T.
\end{equation}
Moreover, if $\kappa=0$, then we can choose $K=1$.
\end{theorem}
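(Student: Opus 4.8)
The plan is to diagonalize $A$ and reduce the estimate to a scalar property of the Mittag--Leffler function, then recombine the modes by H\"older's inequality. By Assumption~I, $A$ is self-adjoint, bounded above by $\kappa$, and has compact resolvent, so there is an orthonormal basis $\{\phi_n\}$ of $H$ consisting of eigenvectors, $A\phi_n=\lambda_n\phi_n$, with $\lambda_n\le\kappa$ and $\lambda_n\to-\infty$. Writing $a_n=\langle u(0),\phi_n\rangle$, the scalar problem $\partial_t^\alpha c_n=\lambda_n c_n$, $c_n(0)=a_n$, is solved by $c_n(t)=a_n E_\alpha(\lambda_n t^\alpha)$, so $u(t)=\sum_n a_n E_\alpha(\lambda_n t^\alpha)\phi_n$. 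In particular $\|u(t)\|^2=\sum_n a_n^2 E_\alpha(\lambda_n t^\alpha)^2$, with $\|u(0)\|^2=\sum_n a_n^2$ and $\|u(T)\|^2=\sum_n a_n^2 E_\alpha(\lambda_n T^\alpha)^2$; all series converge since $u$ is assumed to solve \eqref{eq1}.

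The heart of the argument is a uniform pointwise bound on each mode. First I would record that for $\lambda\le 0$ the map $t\mapsto E_\alpha(\lambda t^\alpha)$ is logarithmically convex on $[0,\infty)$: the function $x\mapsto E_\alpha(-x)$ is completely monotone on $[0,\infty)$, so $\log E_\alpha(-x)$ is convex and nonincreasing, while $t\mapsto |\lambda| t^\alpha$ is concave for $0<\alpha\le 1$, and a convex nonincreasing function composed with a concave function is convex. Since $E_\alpha(0)=1$, this log-convexity yields the chord estimate $E_\alpha(\lambda t^\alpha)\le E_\alpha(\lambda T^\alpha)^{t/T}$ for $\lambda\le 0$ and $0\le t\le T$. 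For the finitely many positive eigenvalues, which all lie in $(0,\kappa]$, I would instead control the ratio directly: the map $(\lambda,t)\mapsto E_\alpha(\lambda t^\alpha)\,E_\alpha(\lambda T^\alpha)^{-t/T}$ is continuous and strictly positive on the compact set $[0,\kappa]\times[0,T]$, hence bounded by some $K\ge 1$. Combining the two cases gives, for every $n$ and every $t\in[0,T]$, the single estimate $E_\alpha(\lambda_n t^\alpha)\le K\,E_\alpha(\lambda_n T^\alpha)^{t/T}$, and $K=1$ when $\kappa=0$ because then only the log-convex case occurs.

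Finally I would reassemble. Squaring the modewise bound and summing,
\[
\|u(t)\|^2 \le K^2 \sum_n a_n^2 \bigl(E_\alpha(\lambda_n T^\alpha)^2\bigr)^{t/T},
\]
and then applying H\"older's inequality with conjugate exponents $\tfrac{1}{1-t/T}$ and $\tfrac{T}{t}$ to write each summand as $(a_n^2)^{1-t/T}\bigl(a_n^2 E_\alpha(\lambda_n T^\alpha)^2\bigr)^{t/T}$, I obtain
\[
\|u(t)\|^2 \le K^2\Bigl(\sum_n a_n^2\Bigr)^{1-t/T}\Bigl(\sum_n a_n^2 E_\alpha(\lambda_n T^\alpha)^2\Bigr)^{t/T} = K^2\,\|u(0)\|^{2(1-t/T)}\,\|u(T)\|^{2t/T}.
\]
Taking square roots yields \eqref{lceq1}, the endpoints $t=0,T$ being trivial. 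The step I expect to be the main obstacle is the modewise analysis of $E_\alpha$: establishing the complete monotonicity (equivalently, log-convexity) that disposes of the infinitely many nonpositive eigenvalues uniformly, and checking that the positive eigenvalues contribute only a bounded factor. This is precisely where the fractional case departs from $\alpha=1$ and forces $K>1$ once $\kappa>0$, since for $\lambda>0$ and $0<\alpha<1$ the function $\log E_\alpha(\lambda t^\alpha)$ is concave near $t=0$ and asymptotically linear, hence not convex.
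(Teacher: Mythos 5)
Your proof is correct, and it reaches the estimate by a route that differs from the paper's in two genuine ways. The paper splits $u=u_1+u_2$ into the finitely many modes with nonnegative eigenvalues of $A$ and the decaying tail, proves that $\Vert u_2(t)\Vert^2$ is itself completely monotone (using that complete monotonicity of $t\mapsto E_{\alpha,1}(-\la_nt^{\alpha})$ is preserved under squaring and summation), applies the log-convexity of that single scalar function, handles $u_1$ by monotonicity with an explicit constant $K_1=E_{\alpha,1}(-\la_1T^{\alpha})$, and glues the pieces with the triangle inequality to get $K=K_1+1$. You instead establish a single modewise chord estimate $E_{\alpha}(\la_nt^{\alpha})\le K\,E_{\alpha}(\la_nT^{\alpha})^{t/T}$ --- deriving log-convexity in $t$ from Pollard's complete monotonicity of $x\mapsto E_{\alpha}(-x)$ composed with the concave map $t\mapsto |\la|t^{\alpha}$, and controlling the finitely many unstable modes by continuity and compactness on $[0,\kappa]\times[0,T]$ --- and then recombine all modes at once with the discrete H\"older inequality. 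Your version avoids the closure properties of complete monotonicity for the infinite sum (you only ever need log-convexity of each scalar factor, anchored at $E_{\alpha}(0)=1$) and produces the final constant $K$ directly rather than $K_1+1$, at the price of doing the H\"older bookkeeping explicitly; the paper's version isolates the log-convexity of $\Vert u_2(t)\Vert^2$ as a statement of independent interest, which it reuses elsewhere (e.g.\ in the proof of Lemma \ref{dtalm}). Both treatments give $K=1$ when $\kappa=0$ for the same reason: only the log-convex modes occur. The one step you should make explicit if you write this up is the justification that a convex nonincreasing function composed with a concave one is convex, and that $E_{\alpha}(\la T^{\alpha})\ge 1>0$ on $[0,\kappa]$ so the ratio you maximize is well defined; neither is a gap, just a detail to record.
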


We emphasize that the stability estimate is uniform for all 
$0<\alpha \le 1$ in terms of the logarithmic convexity, 
although the behavior of $u$ for $0<\alpha<1$ near $t=0$ is expected to
be very different from the 
parabolic case $\alpha=1$.
 
We note that in the proof, the constant $K$ is given in terms of the first eigenvalue of $-A$. The estimate \eqref{lceq1} is commonly referred to as the logarithmic convexity of the solution to \eqref{eq1}. The logarithmic convexity method has been extensively investigated for backward parabolic problems, see for instance 
\cite{AN'63, ACM'21, ACM'21'', KP'60, Pa'75, YZ'01} and the references therein. The method in \cite{KP'60} relies on the complex variables to prove \eqref{lceq1}. Owing to the rapidly increasing references, we do not create a list of works and we refer to \cite{Tuan} and \cite{We} for example.
However, to the best of the authors' knowledge, Theorem \ref{thm1} is the first logarithmic convexity result for time fractional evolution equations.

Moreover, we consider
\\
\textbf{Inverse initial data problem.} 
Let $R>0$ be a fixed constant. We denote the set of admissible initial data by
\begin{align}\label{init1}
\mathcal{I}_R:=\{u_0\in D(A) \colon \|Au_0\|, \Vert u_0\Vert \le R\}. 
\end{align}
We are concerned with the stable determination of an initial datum $u_0\in \mathcal{I}_R$, where $u$ is the solution to \eqref{eq1}, from the final datum 
$u(T)$. 

More precisely, we prove conditional stability estimates of Hölder type. Conditional stability in this context means that a priori bound of the initial data is known. We emphasize that such a stability estimate is useful when dealing with numerical reconstruction of initial data, see for instance \cite{LYZ'09, YZ'01}. In practice, the original problem is replaced by an approximated one called the regularized problem. The conditional stability can be used to provide a suitable choice for the regularization parameter and to determine the rate of convergence to an exact solution of the original problem \cite{CY'00}.

Now we state two kinds of H\"older stability as Theorems \ref{thmstab} and \ref{thm3}
concerning the backward problem of determining an initial value.
\begin{theorem}\label{thmstab} 
Let $0 < \alpha \le 1$.
Let $u_0\in \mathcal{I}_R$ and let $u(t)$ be the corresponding solution to \eqref{eq1}. Then there exists a constant $\theta \in (0,\frac{1}{2})$ depending on 
$u_0$ such that the following estimate holds
\begin{equation} \label{hstab}
    \|u_0\| \le \Vert u(T)\Vert^{\theta}
\sqrt{\Vert u(T)\Vert^{2-2\theta}
+ KR^{2-2\theta}\frac{2 T^\alpha}{\alpha\Gamma(\alpha)}}. 
\end{equation}
\end{theorem}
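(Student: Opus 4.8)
The plan is to start from the Caputo analogue of the fundamental theorem of calculus. Since $\partial_t^\alpha u = Au$ and the Riemann--Liouville integral $I^\alpha$ inverts $\partial_t^\alpha$ (because $I^\alpha I^{1-\alpha}=I^1$), applying $I^\alpha$ to the equation and evaluating at $t=T$ yields the representation
\[
u(0) = u(T) - \frac{1}{\Gamma(\alpha)}\int_0^T (T-s)^{\alpha-1} Au(s)\,\d s,
\]
the case $\alpha=1$ being the ordinary fundamental theorem. I would then pair this identity with $u_0=u(0)$ in $H$, use the self-adjointness of $A$ to write $\langle Au(s),u_0\rangle=\langle u(s),Au_0\rangle$, and bound the two terms by Cauchy--Schwarz together with the a priori bound $\|Au_0\|\le R$:
\[
\|u_0\|^2 \le \|u(T)\|\,\|u_0\| + \frac{R}{\Gamma(\alpha)}\int_0^T (T-s)^{\alpha-1}\|u(s)\|\,\d s.
\]
A single application of Young's inequality to the first term absorbs $\tfrac12\|u_0\|^2$ into the left-hand side and produces the factor $2$ appearing in \eqref{hstab}, leaving
\[
\|u_0\|^2 \le \|u(T)\|^2 + \frac{2R}{\Gamma(\alpha)}\int_0^T (T-s)^{\alpha-1}\|u(s)\|\,\d s.
\]

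The heart of the argument is to turn the integral into a clean H\"older factor, and this is where $\theta$, and its dependence on $u_0$, enters. Inserting the log-convexity bound $\|u(s)\|\le K\|u_0\|^{1-s/T}\|u(T)\|^{s/T}\le KR^{1-s/T}\|u(T)\|^{s/T}$ directly under the integral would leave an awkward $s$-dependent exponent, so instead I would invoke the first mean value theorem for integrals: as $s\mapsto\|u(s)\|$ is continuous on $[0,T]$ and the weight $(T-s)^{\alpha-1}$ is positive and integrable, there is $s^\ast\in(0,T)$ with $\int_0^T (T-s)^{\alpha-1}\|u(s)\|\,\d s=\|u(s^\ast)\|\,\frac{T^\alpha}{\alpha}$. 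Setting $\theta:=s^\ast/(2T)\in(0,\tfrac12)$ and applying Theorem~\ref{thm1} at the single time $s^\ast$ gives $\|u(s^\ast)\|\le KR^{1-2\theta}\|u(T)\|^{2\theta}$, whence the integral term is bounded by $K\|u(T)\|^{2\theta}R^{2-2\theta}\frac{2T^\alpha}{\alpha\Gamma(\alpha)}$. Since $s^\ast$ is determined by the trajectory $\|u(\cdot)\|$, the exponent $\theta$ depends on $u_0$, as asserted.

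Finally I would collect the bounds into $\|u_0\|^2\le\|u(T)\|^2+K\|u(T)\|^{2\theta}R^{2-2\theta}\frac{2T^\alpha}{\alpha\Gamma(\alpha)}$, factor $\|u(T)\|^{2\theta}$ out of the right-hand side using $\|u(T)\|^2=\|u(T)\|^{2\theta}\|u(T)\|^{2-2\theta}$, and take square roots to recover \eqref{hstab}. I expect the obstacles to be of two kinds. The conceptual one is exactly the mean-value-theorem device: realizing that one should \emph{not} integrate the log-convexity estimate but rather evaluate it at a single intermediate time, which simultaneously produces the H\"older exponent and explains its dependence on the data. The technical ones are justifying the regularity needed for the Caputo fundamental theorem and for $u(s)\in D(A)$ (both of which follow from $u_0\in D(A)$ through the Mittag--Leffler/spectral representation and the uniform bound $\|u(s)\|\le E_\alpha(\kappa T^\alpha)\|u_0\|$), and ensuring $s^\ast\in(0,T)$ strictly so that $\theta$ lands in the open interval $(0,\tfrac12)$; the degenerate case in which $\|u(\cdot)\|$ is constant can be handled directly.
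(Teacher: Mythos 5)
Your proposal is correct and, after the integral inequality is in hand, coincides with the paper's argument; but the way you reach that inequality is genuinely different and worth comparing. The paper first proves the separate Lemma~\ref{dtalm}, $-\partial_t^\alpha\|u(t)\|^2 \le 2\|u(t)\|\,\|Au_0\|$, by a mode-by-mode analysis of $\partial_t^\alpha\bigl[(E_{\alpha,1}(-\lambda_n t^\alpha))^2\bigr]$, splitting the spectrum at $\lambda_{m+1}$ and exploiting sign and monotonicity properties of the Mittag-Leffler functions; it then applies $J_t^\alpha$ to this scalar differential inequality, using the positivity of the kernel and the identity \eqref{19} (whose applicability requires the verification \eqref{20} that $\|u(t)\|^2\in W^{1,1}(0,T)$), to obtain
$\|u(0)\|^2 \le \|u(T)\|^2 + \tfrac{2}{\Gamma(\alpha)}\int_0^T(T-s)^{\alpha-1}\|u(s)\|\,\d s\,\|Au_0\|$.
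You arrive at exactly the same inequality by integrating the equation itself in $H$, pairing with $u_0$, moving $A$ onto $u_0$ by self-adjointness, and applying Cauchy--Schwarz and Young; this bypasses Lemma~\ref{dtalm} and all of its Mittag-Leffler sign analysis, at the cost of an essentially equivalent regularity check (the vector-valued version of \eqref{19}, which you correctly identify and which follows from the spectral representation just as \eqref{20} does). It is a nice coincidence that the factor $2$ arises for different reasons in the two arguments --- from $\partial_t\|u\|^2=2\langle u',u\rangle$ in the paper, from Young's inequality in yours --- yet the final constants agree exactly. From the mean value theorem onward (the choice $\theta=\xi/(2T)$, the application of Theorem~\ref{thm1} at the single intermediate time, the factoring of $\|u(T)\|^{2\theta}$, and the caveat about $\xi$ lying in the open interval) your argument is identical to the paper's, so the net effect of your route is a shorter and more elementary proof of the same estimate \eqref{hstab}, though it does not yield the pointwise differential inequality \eqref{dtaeq}, which may be of independent interest.
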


\begin{remark}
Surprisingly, Theorem \ref{thmstab} is new even for the integer case $ \alpha=1$. Usually in this case, only a logarithmic stability is known, see e.g., \cite[p.~51]{Is'17} and \cite[Theorem 2]{ACM'21''}.
\end{remark}

In Theorem \ref{thmstab}, the exponent $\theta = \theta(u_0)$ depends on $u_0 \in \mathcal{I}_R$ and we may be able to choose
a sequence $u_0^k \in \mathcal{I}_R$, $k\in \N$ such that 
$\lim_{k\to\infty} \theta(u_0^k) = 0$, which means that 
the estimate \eqref{hstab} does not make sense in the whole 
$\mathcal{I}_R$.

We can guarantee a uniform lower bound $\theta$ if we assume 
a lower bound of $\Vert u_0\Vert$.
\begin{proposition} \label{prop1}
Let $0 < \alpha \le 1$.
We assume that $\la_n>0$ for all $n\in \N$ and that $\sum_{n=1}^{\infty}
\frac{1}{\la_n^2} < \infty$.
For arbitrarily fixed constants $R>0$ and $\delta>0$, we assume that 
a set $M \subset \mathcal{I}_R$ satisfies
$$
M \subset \{ u_0\in \mathcal{I}_R:\, \Vert u_0\Vert \ge \delta\}
$$
and that the embedding $M \subset H$ is compact.
Then there exists a constant $\theta_0(\delta) \in (0, \, \frac{1}{2})$ 
depending on $R, \delta$, such that 
$$
\|u_0\| \le K R\sqrt{1 + \frac{2T^\alpha}{\alpha\Gamma(\alpha)}} 
\|u(T)\|^{\theta_0(\delta)}
$$
for all $u_0 \in M$.
\end{proposition}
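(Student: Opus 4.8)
The plan is to obtain Proposition \ref{prop1} as a uniform version of Theorem \ref{thmstab}: the only obstruction to a global Hölder estimate on $M$ is that the exponent $\theta=\theta(u_0)$ produced by Theorem \ref{thmstab} may degenerate to $0$, so I would show that under the stated hypotheses
\[
\theta_0(\delta):=\inf_{u_0\in M}\theta(u_0)>0,
\]
and then feed the uniform bound $\theta(u_0)\ge\theta_0(\delta)$ back into \eqref{hstab}. For the second, purely algebraic step, I would first record that the positivity and bound $0<E_\alpha(-\lambda_n T^\alpha)\le 1$ of the Mittag-Leffler function give, via the spectral representation $u(t)=\sum_n E_\alpha(-\lambda_n t^\alpha)\langle u_0,\phi_n\rangle\phi_n$, the inequality $\|u(T)\|\le\|u_0\|\le R$. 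Inserting $\|u(T)\|\le R$ into the square root of \eqref{hstab}, using $K\ge1$ and the monotonicity of $s\mapsto s^{\theta}$ on $(0,1]$ together with $\theta\ge\theta_0(\delta)$, one replaces $\theta$ by $\theta_0(\delta)$ and arrives at $\|u_0\|\le KR\sqrt{1+\frac{2T^\alpha}{\alpha\Gamma(\alpha)}}\,\|u(T)\|^{\theta_0(\delta)}$, which is the claimed estimate. Thus the whole proposition reduces to the lower bound on $\theta_0(\delta)$.

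To prove $\theta_0(\delta)>0$ I would argue by contradiction and exploit compactness. Suppose there is a sequence $u_0^k\in M$ with $\theta(u_0^k)\to0$. Since $M\hookrightarrow H$ is compact, a subsequence converges strongly in $H$ to some $w$; the constraint $\|u_0^k\|\ge\delta$ passes to the limit, so $\|w\|\ge\delta>0$ and in particular $w\neq0$. The a priori bound $\|Au_0^k\|\le R$ means the sequence is bounded in the graph norm of $A$, and the assumption $\sum_n\lambda_n^{-2}<\infty$ makes the embedding $D(A)\hookrightarrow H$ compact; hence, after a further extraction, $u_0^k\rightharpoonup w$ in $D(A)$ with $\|Aw\|\le R$, so that $w\in\mathcal{I}_R$ and Theorem \ref{thmstab} applies to $w$, giving $\theta(w)\in(0,\tfrac12)$, in particular $\theta(w)>0$.

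The decisive step is to contradict $\theta(u_0^k)\to0$. Writing $c_n^k=\langle u_0^k,\phi_n\rangle$, the degeneration $\theta(u_0^k)\to0$ forces the spectral mass of $u_0^k$ to escape to frequencies $\lambda_n\to\infty$, where $E_\alpha(-\lambda_n T^\alpha)$ is arbitrarily small. Compactness of $M\hookrightarrow H$ is precisely the tightness that forbids this: $\sum_{n>N}(c_n^k)^2\to0$ uniformly in $k$ as $N\to\infty$. Combined with $\|u_0^k\|\ge\delta$, a definite fraction of the mass stays on a fixed finite band $\{n\le N_0\}$, on which $E_\alpha(-\lambda_n T^\alpha)$ is bounded below by a positive constant; consequently $\|u^k(T)\|$ is bounded below, and $\theta(u_0^k)$ cannot tend to $0$. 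Equivalently, positivity of the Mittag-Leffler function makes the solution map $w\mapsto w(T)$ injective, so $\|w(T)\|>0$, and the lower semicontinuity of $\theta$ along the strongly convergent sequence yields $\liminf_k\theta(u_0^k)\ge\theta(w)>0$, the desired contradiction.

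I expect the main obstacle to be this last step, namely controlling how the Hölder exponent of Theorem \ref{thmstab} depends on the entire spectral profile of $u_0$ and ruling out high-frequency concentration uniformly over $M$. The analytic tools that make it work are the strict positivity and monotonicity of $z\mapsto E_\alpha(-z)$ obtained from the Mittag-Leffler analysis, which guarantee both the injectivity of the solution map and the uniform lower bound for $E_\alpha(-\lambda_n T^\alpha)$ on finite frequency bands; the hypotheses $\|u_0\|\ge\delta$ and the compact embedding $M\hookrightarrow H$ supply, respectively, the nondegeneracy of the limit and the tightness needed to close the contradiction.
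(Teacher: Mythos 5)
Your overall architecture coincides with the paper's: reduce the proposition to a uniform lower bound $\theta_0(\delta)=\inf_{u_0\in M}\theta(u_0)>0$, argue by contradiction with a sequence $u_0^k\in M$, $\theta(u_0^k)\to 0$, and use the compactness of $M$ to extract a strong limit $u_0$ with $\Vert u_0\Vert\ge\delta$. The algebraic reduction in your first paragraph is fine. The problem is that the decisive step is asserted rather than proved, and the two mechanisms you offer for it do not work as stated. First, the implication ``$\Vert u^k(T)\Vert$ bounded below, hence $\theta(u_0^k)$ cannot tend to $0$'' is a non sequitur: in Theorem \ref{thmstab} the exponent is $\theta=\xi/(2T)$, where $\xi$ is a mean-value point satisfying $\int_0^T(T-s)^{\alpha-1}\Vert u(s)\Vert\,\d s=\Vert u(\xi)\Vert\,T^{\alpha}/\alpha$; it is controlled by the whole profile of $s\mapsto\Vert u(s)\Vert$ on $[0,T]$, not by $\Vert u(T)\Vert$, and no inequality linking the two is established. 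Second, ``lower semicontinuity of $\theta$ along the strongly convergent sequence'' is precisely the hard point and cannot be invoked: $\theta(u_0)$ is not even canonically defined (the mean-value point $\xi$ need not be unique), and you give no argument for any semicontinuity. Likewise, the claim that $\theta(u_0^k)\to 0$ ``forces the spectral mass to escape to high frequencies'' is left unproved.

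For comparison, the paper closes the gap by working directly with the defining identity: setting $\xi_k=2T\theta(u_0^k)\to 0$, it shows $\Vert u_k(\xi_k)\Vert-\Vert u_k(0)\Vert\to 0$ by dominated convergence, using the uniform bound $\vert\langle u_0^k,\va_n\rangle\vert\le R/\la_n$ (from $\Vert Au_0^k\Vert\le R$) together with $\sum_n\la_n^{-2}<\infty$ --- this, not a compact embedding $D(A)\hookrightarrow H$, is the role of that hypothesis. Passing to the limit in the identity yields $\int_0^T(T-s)^{\alpha-1}\bigl(\Vert u(s)\Vert-\Vert u(0)\Vert\bigr)\,\d s=0$; since $\Vert u(s)\Vert\le\Vert u(0)\Vert$, this forces $\Vert u(s)\Vert\equiv\Vert u(0)\Vert$, and then $E_{\alpha,1}(-\la_ns^{\alpha})<1$ gives $u_0=0$, contradicting $\Vert u_0\Vert\ge\delta$. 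Your tightness observation (uniformly small spectral tails on the compact set $M$) could in fact be turned into a complete alternative argument --- it yields equicontinuity of $t\mapsto\Vert u_k(t)\Vert$ at $t=0$, while $\Vert u_k(\xi_k)\Vert\le c_1\Vert u_k(0)\Vert$ with a fixed $c_1<1$ coming from $E_{\alpha,1}(-\la_1s^{\alpha})<1$ --- but as written you do not carry out either this or the paper's computation, so the proof is incomplete at its central point.
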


The condition $\sum_{n=1}^{\infty}\frac{1}{\la_n^2} < \infty$ is satisfied for
a regular elliptic operator $A$ in a bounded domain 
$\Omega \subset \mathbb{R}^N$, $N=1,2,3$, for example with the zero Dirichlet 
boundary condition (see Subsection \ref{sec4.1}).
In general, $\lim_{\delta \to 0}\theta_0 = 0$.

We can rewrite the proposition as
$$
\|u_0\| \le K R\sqrt{1 + \frac{2T^\alpha}{\alpha\Gamma(\alpha)}} 
\|u(T)\|^{\theta_0(\delta)} + \delta
$$
for all $u_0 \in \mathcal{I}_R$ and all $\delta>0$.
If we know the decay rate of $\theta_0(\delta)$ as 
$\delta \to 0$, then by minimizing the right-hand side of the above
inequality with respect to $\delta$, we can reach concrete 
conditional stability estimate.  However it is difficult to discuss uniformly
in $0<\alpha \le 1$, as we can understand that the case $\alpha=1$
makes solution very smooth for any small $t>0$, but $0<\alpha<1$ does not
have such strong smoothing property.

Restricted to $0<\alpha<1$, we can prove conditional 
stability of H\"older type with explicit exponent. For the statement, we introduce more general admissible sets of unknown
initial values.
Since the operator $\kappa-A$ is non-negative, the fractional powers 
$(\kappa-A)^\epsilon$ are well-defined for any $\epsilon>0$. Moreover, since $H$ is a Hilbert space, it is known that
$$D((\kappa-A)^\epsilon)=[H, D(A)]_\epsilon, \qquad \text{if } 0<\epsilon \le 1,$$
where $[\cdot, \cdot]_\epsilon$ denotes the complex interpolation bracket. We refer to \cite{MS'01} for more details. Let $\epsilon>0$ and $R>0$ be two positive constants. We consider the admissible set
\begin{align*}
\mathcal{I}_{\epsilon,R}:=\{u_0\in D((\kappa-A)^\epsilon) \colon \|u_0\|_{D((\kappa-A)^\epsilon)} \le R\}.
\end{align*}

\begin{theorem}\label{thm3}
Let $0<\alpha<1$, and 
let $\beta =\frac{\epsilon}{\epsilon + 1}$, $u_0\in \mathcal{I}_{\epsilon,R}$ and $u(t)$ being the corresponding solution to \eqref{eq1}. Then there exists a constant $C_2>0$ depending on $\alpha$ and $\lambda_{m+1}$ such that
\begin{equation}\label{eqthm3}
    \|u(0)\| \le \left(1+ C_2^{\beta} R^{1-\beta}\right) \|u(T)\|^{\beta}.
\end{equation}
\end{theorem}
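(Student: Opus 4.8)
The plan is to diagonalize the problem. Since $A$ is self-adjoint with compact resolvent and bounded above by $\kappa$, there is an orthonormal basis of eigenfunctions $\phi_n$ with $A\phi_n=-\lambda_n\phi_n$, where $-\kappa\le\lambda_1\le\lambda_2\le\cdots$ and $\lambda_n\to+\infty$, so that $(\kappa-A)\phi_n=(\kappa+\lambda_n)\phi_n$ with $\kappa+\lambda_n\ge 0$. Writing $a_n=\langle u_0,\phi_n\rangle$, the solution is $u(t)=\sum_n a_n E_\alpha(-\lambda_n t^\alpha)\phi_n$, so that $\|u(0)\|^2=\sum_n a_n^2$, $\|u(T)\|^2=\sum_n a_n^2 E_\alpha(-\lambda_n T^\alpha)^2$, and the admissibility $u_0\in\mathcal{I}_{\epsilon,R}$ reads $\sum_n a_n^2(\kappa+\lambda_n)^{2\epsilon}\le R^2$. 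The single analytic input I would use is the two-sided bound valid for $0<\alpha<1$, namely $E_\alpha(-x)\asymp (1+x)^{-1}$ for $x\ge 0$; the lower bound $E_\alpha(-x)\ge c_\alpha(1+x)^{-1}$ is exactly what replaces the exponential decay of the parabolic case $\alpha=1$ and is responsible for obtaining Hölder rather than logarithmic stability.

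The core of the argument is a weighted interpolation carried out mode by mode. For an index $n$ with $\lambda_n>0$ I would write $a_n^2$ as the quotient of $\bigl(a_n^2 E_\alpha(-\lambda_n T^\alpha)^2\bigr)^{\beta}\bigl(a_n^2(\kappa+\lambda_n)^{2\epsilon}\bigr)^{1-\beta}$ by the weight $E_\alpha(-\lambda_n T^\alpha)^{2\beta}(\kappa+\lambda_n)^{2\epsilon(1-\beta)}$. Summing over these indices and applying Hölder's inequality with conjugate exponents $1/\beta$ and $1/(1-\beta)$ bounds the corresponding part of $\sum_n a_n^2$ by $\|u(T)\|^{2\beta}R^{2(1-\beta)}$ times the supremum of the reciprocal weight. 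Using the lower Mittag-Leffler bound, this weight behaves like $\lambda_n^{2(\epsilon(1-\beta)-\beta)}$ as $\lambda_n\to\infty$, so its reciprocal stays bounded precisely when $\epsilon(1-\beta)-\beta\ge 0$, i.e. $\beta\le\frac{\epsilon}{\epsilon+1}$; the largest admissible value $\beta=\frac{\epsilon}{\epsilon+1}$ makes the exponent vanish, which is the choice in the statement and the optimal one. The resulting finite supremum, controlled by $\alpha$ and the first relevant eigenvalue, produces the constant $C_2$.

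The remaining difficulty, which I expect to be the genuine obstacle, is the finitely many modes for which the admissibility weight $(\kappa+\lambda_n)^{\epsilon}$ degenerates, that is, those $n\le m$ with $\lambda_n\le 0$ (in particular $\kappa+\lambda_n$ may vanish when $\kappa$ is attained as an eigenvalue of $A$); for these the a priori bound carries no information and the interpolation above breaks down. Here I would split the sum at the first positive eigenvalue $\lambda_{m+1}$ and treat the head directly: since $E_\alpha(-\lambda_n T^\alpha)\ge 1$ whenever $\lambda_n\le 0$, one gets $\sum_{n\le m}a_n^2\le\sum_{n\le m}a_n^2 E_\alpha(-\lambda_n T^\alpha)^2\le\|u(T)\|^2$, so the head is controlled by the final data alone. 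Combining the head and tail estimates through $\|u(0)\|^2=\sum_{n\le m}a_n^2+\sum_{n>m}a_n^2$ gives $\|u(0)\|^2\le\|u(T)\|^2+C_2^{2\beta}R^{2(1-\beta)}\|u(T)\|^{2\beta}$, from which, after controlling $\|u(T)\|$ by the a priori data via $E_\alpha(-\lambda_n T^\alpha)\le E_\alpha(-\lambda_1 T^\alpha)$, the stated form $\|u(0)\|\le\bigl(1+C_2^{\beta}R^{1-\beta}\bigr)\|u(T)\|^{\beta}$ follows by elementary manipulation. The delicate points to verify are the uniformity in $n$ of the Mittag-Leffler lower bound and the fact that the degenerate low modes are exactly those separated off by $\lambda_{m+1}$; once these are in place the estimate is a consequence of Hölder's inequality.
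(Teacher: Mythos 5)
Your proposal is correct and follows essentially the same route as the paper: split the spectrum at the first positive eigenvalue $\lambda_{m+1}$, control the low modes by $E_{\alpha,1}(-\lambda_n T^\alpha)\ge 1$, and interpolate the tail via H\"older's inequality with exponents $1/\beta$ and $1/(1-\beta)$ together with the bound $E_{\alpha,1}(-\lambda_n T^\alpha)^{-1}\le C(1+\lambda_n T^\alpha)$, which forces $\beta=\frac{\epsilon}{\epsilon+1}$ by exactly the power counting you describe. The only cosmetic difference is that the paper rewrites $\langle u_0,\varphi_n\rangle$ in terms of $\langle u(T),\varphi_n\rangle$ before applying H\"older and normalizes $\Vert u(T)\Vert\le 1$ at the end, whereas you interpolate the weights directly; these are algebraically equivalent.
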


Comparing to the existing literature, Sakamoto and Yamamoto \cite{SY'11} were the first to prove the well-posedness of a backward problem like \eqref{eq1} with smooth data $u_T$. More precisely, they considered the case when $H=L^2(\Omega)$, with $\Omega \subset \mathbb{R}^N$ is a smooth bounded domain, and $A$ is a symmetric uniformly elliptic operator with zero Dirichlet boundary condition. Namely, they proved that there exists a constant $C>0$ depending on $T$ such that for all $u_T \in H^2(\Omega) \cap H^1_0(\Omega)$ the corresponding solution exists and satisfies
\begin{equation}\label{lstab}
    \|u(0,\cdot)\|_{L^2(\Omega)} \le C \|u(T,\cdot)\|_{H^2(\Omega)}.
\end{equation}
This is an unconditional Lipschitz stability, that is,
a strong stability result, but under a stronger norm of the data $\|u(T,\cdot)\|_{H^2(\Omega)}$. In this particular case, although the H\"older stability 
\eqref{eqthm3} 
is a slightly weaker
estimate, \eqref{eqthm3} establishes an estimate in terms of the same norm 
$\Vert u(0,\cdot)\Vert_{L^2(\OOO)}$ and $\Vert u(T,\cdot)\Vert_{L^2(\OOO)}$.
This is crucial from a practical perspective, since only a measurement $u_T^\delta \in L^2(\Omega)$ of $u_T$ is accessible, where $\delta>0$ indicates the noise level. Note that $u_T^\delta$ is usually non-smooth due to the random noise. It should be pointed out that \eqref{lstab} has been recently extended to the non-symmetric operator case in \cite{FLY'20}.

We further note that by the interpolation inequality of the Sobolev norms,
for $\epsilon > 1$, we can derive the conclusion of Theorem \ref{thm3} 
directly from \eqref{lstab}.
 
The rest of the article is organized as follows: in Section \ref{sec2}, we start by recalling some preliminaries. Then we prove some key properties of the Mittag-Leffler function that will be used later. In Section \ref{sec3}, we prove the logarithmic convexity estimate stated in Theorem \ref{thm1}. Next, we apply the logarithmic convexity to establish Theorem \ref{thmstab} and then we prove Theorem \ref{thm3} on the Hölder stability for the inverse initial data problem. At this level, some further results are developed and the reconstruction of the solution at past times from noisy data is briefly discussed. Finally, in Section \ref{sec4}, we illustrate the abstract results by presenting several examples of time fractional systems.

\section{Preliminary results}\label{sec2}
Let us start by recalling some definitions and properties needed later.

A function $f:[0, \infty) \rightarrow (0,\infty)$ is log-convex if $\log f(t)$ is convex, i.e., for all $t, s \ge 0 \text { and } 0< \theta < 1$,
\begin{equation} \label{lc}
    f((1-\theta) t+\theta s) \leq f(t)^{1-\theta} f(s)^{\theta}.
\end{equation}
If $f(t)$ is $C^2[0,\infty)$, then it is log-convex on $[0,\infty)$ if and only if the differential inequality
\begin{equation}\label{dineq}
    f(t)f''(t)-(f'(t))^2\ge 0
\end{equation}
holds for all $t \ge 0$. It is known that the sum of two log-convex functions is also log-convex, see e.g., \cite[p.~22]{NP'18}.

A function $f:[0, \infty) \rightarrow \mathbb{R}$ is called completely monotone if $f$ has derivatives of all orders and satisfies
$$
(-1)^{k} f^{(k)}(t) \ge 0 \qquad \text{ for all } t>0, \; k=0,1,2,\ldots$$
The function $f(t)$ is completely monotone if and only if
\begin{equation}\label{cmrep}
    f(t)=\int_{0}^{\infty} \mathrm{e}^{-s t} \,\d \rho(s), \qquad t>0,
\end{equation}
where $\rho(s)$ is non-decreasing and the integral converges. We refer to \cite[Theorem 12b, p.~161]{Wi'46}. Note that the complete monotonicity is closed under addition, multiplication and pointwise convergence, see \cite[Corollary 1.6]{ScV'12}.

We will show the following key lemma (e.g., Theorem 1.3.4 (p.~21) in
\cite{NP'18}).
\begin{lemma}\label{cmlc}
Any completely monotone function $f:[0, \infty) \rightarrow (0,\infty)$ is log-convex.
\end{lemma}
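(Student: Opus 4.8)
The plan is to exploit the integral representation \eqref{cmrep} of a completely monotone function together with H\"older's inequality, which directly reproduces the defining inequality \eqref{lc} for log-convexity. Since $f$ is completely monotone and maps into $(0,\infty)$, by the Hausdorff--Bernstein--Widder theorem it admits the representation $f(t)=\int_0^\infty \mathrm{e}^{-st}\,\d\rho(s)$ for $t>0$, where $\rho$ is non-decreasing and the integral converges. Fix $t,s\ge 0$ and $0<\theta<1$. The key observation is the algebraic identity
\begin{equation*}
\mathrm{e}^{-r((1-\theta)t+\theta s)} = \left(\mathrm{e}^{-rt}\right)^{1-\theta}\left(\mathrm{e}^{-rs}\right)^{\theta},
\qquad r>0,
\end{equation*}
so that the value of $f$ at the convex combination $(1-\theta)t+\theta s$ is an integral of a product of two factors.

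Applying H\"older's inequality with conjugate exponents $p=\frac{1}{1-\theta}$ and $q=\frac{1}{\theta}$ against the positive measure $\d\rho(r)$, I would obtain
\begin{equation*}
f((1-\theta)t+\theta s)
=\int_0^\infty \left(\mathrm{e}^{-rt}\right)^{1-\theta}\left(\mathrm{e}^{-rs}\right)^{\theta}\,\d\rho(r)
\le \left(\int_0^\infty \mathrm{e}^{-rt}\,\d\rho(r)\right)^{1-\theta}
\left(\int_0^\infty \mathrm{e}^{-rs}\,\d\rho(r)\right)^{\theta}
= f(t)^{1-\theta}f(s)^{\theta},
\end{equation*}
which is precisely \eqref{lc}. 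This is the whole argument in essence; it has the advantage of avoiding any differentiation and of working directly with the convexity inequality rather than the second-order criterion \eqref{dineq}.

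The only genuine care needed concerns the endpoints: the representation \eqref{cmrep} is asserted for $t>0$, so for $t=0$ (or $s=0$) I would first establish the inequality for $t,s>0$ and then pass to the limit. Since a completely monotone function is non-increasing and convex on $(0,\infty)$ and takes finite positive values on $[0,\infty)$, it is continuous up to $t=0$, and by monotone convergence $\lim_{t\to 0^+}f(t)=\int_0^\infty \d\rho = f(0)$; taking limits in the inequality just proved then covers the boundary case. I expect this endpoint bookkeeping, rather than the H\"older step itself, to be the main (and minor) obstacle. As an alternative route one could differentiate the representation twice under the integral sign to get $-f'(t)=\int_0^\infty r\,\mathrm{e}^{-rt}\,\d\rho(r)$ and $f''(t)=\int_0^\infty r^2\,\mathrm{e}^{-rt}\,\d\rho(r)$, and then apply the Cauchy--Schwarz inequality in $L^2(\mathrm{e}^{-rt}\,\d\rho)$ to deduce $(f'(t))^2\le f(t)f''(t)$, i.e. \eqref{dineq}, recovering log-convexity through the $C^2$ criterion; but this requires justifying differentiation under the integral and is less direct than the H\"older computation.
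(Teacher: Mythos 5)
Your proof is correct, but it takes a genuinely different route from the paper's. The paper also starts from the Bernstein--Widder representation \eqref{cmrep}, but then differentiates under the integral to get $f'(t)=-\int_0^\infty s\,\mathrm{e}^{-st}\,\d\rho(s)$ and $f''(t)=\int_0^\infty s^2\,\mathrm{e}^{-st}\,\d\rho(s)$, observes that the quadratic $f(t)x^2+2f'(t)x+f''(t)=\int_0^\infty \mathrm{e}^{-st}(x-s)^2\,\d\rho(s)$ is non-negative, and concludes $(f'(t))^2\le f(t)f''(t)$ from the discriminant --- i.e.\ exactly the Cauchy--Schwarz variant you sketch as your ``alternative route,'' combined with the $C^2$ criterion \eqref{dineq}. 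Your primary argument via H\"older's inequality with exponents $\frac{1}{1-\theta}$ and $\frac{1}{\theta}$ instead produces the defining inequality \eqref{lc} directly. What your version buys: it avoids differentiating under the integral sign (a step the paper does not justify) and does not need the equivalence between \eqref{lc} and \eqref{dineq}, so it is the more self-contained argument; your attention to the endpoint $t=0$, where \eqref{cmrep} is only asserted for $t>0$, is also a point the paper passes over silently (and is harmless in any case, since if $t=s=0$ the inequality is trivial, while if only one argument is $0$ the left-hand side involves $f$ at a strictly positive point and one can pass to the limit as you describe). What the paper's version buys is brevity: granting the differentiated representation, the non-negative-quadratic trick is a two-line verification of \eqref{dineq}. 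Both are valid proofs of the lemma.
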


\begin{proof}
By \eqref{cmrep}, we see that $\displaystyle f'(t)=-\int_{0}^{\infty} s \mathrm{e}^{-s t} \,\d \rho(s)$ and $\displaystyle f''(t)=\int_{0}^{\infty} s^2 \mathrm{e}^{-s t} \,\d \rho(s)$. Then the quadratic polynomial $\displaystyle f(t)x^2+ 2f'(t)x+f''(t)=\int_{0}^{\infty} \mathrm{e}^{-s t} (x-s)^2 \,\d \rho(s)$ is non-negative for all $x\in \mathbb{R}$. Therefore, it has a non-positive discriminant. Thus, \eqref{dineq} is satisfied. This completes the proof.
\end{proof}

Let $\alpha, \beta > 0$. We denote by $E_{\alpha,\beta}(z)$ the  Mittag-Leffler function with two parameters
$$
E_{\alpha,\beta}(z) = \sum_{k=0}^{\infty} \frac{z^k}{\Gamma(\alpha k + \beta)}, \qquad z\in \mathbb{C}.
$$
The function $E_{\alpha,\beta}(z)$ is an entire function in $\mathbb{C}$. Next, we collect some useful properties of $E_{\alpha,1}$. We refer to \cite{Pod'99} for more details. Let $\alpha> 0$ and $\lambda \ge 0$. Then
\begin{itemize}
    \item $\dfrac{\d }{\d t} E_{\alpha,1}(t)=\frac{1}{\alpha} E_{\alpha,\alpha}(t)$;
    \item $\dfrac{\d }{\d t} E_{\alpha,1}(-\lambda t^\alpha)=-\lambda t^{\alpha-1} E_{\alpha,\alpha}(-\lambda t^\alpha)$;
    \item $\partial_t^\alpha E_{\alpha,1}(-\lambda t^\alpha)=-\lambda E_{\alpha,1}(-\lambda t^\alpha)$;
    \item $\lim\limits_{t\to -\infty} E_{\alpha,1}(t)=0$.
\end{itemize}

The next lemma will be a key to prove the main result.
\begin{lemma}\label{lemlc}
Let $0 < \alpha \le \beta \le 1$ and $\lambda \ge 0$. Then the functions $t \mapsto E_{\alpha,1}(-\lambda t^{\alpha})$ and $t \mapsto t^{\beta -1} E_{\alpha,\beta}(-\lambda t^{\alpha})$ are log-convex on $[0,\infty)$.
\end{lemma}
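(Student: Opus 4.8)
The plan is to reduce everything to complete monotonicity and then invoke Lemma \ref{cmlc}. Concretely, I would show that both $t\mapsto E_{\alpha,1}(-\lambda t^\alpha)$ and $t\mapsto t^{\beta-1}E_{\alpha,\beta}(-\lambda t^\alpha)$ are completely monotone and strictly positive on $(0,\infty)$; since a completely monotone function is log-convex by Lemma \ref{cmlc}, this yields the assertion. The case $\lambda=0$ is immediate, since the functions reduce to the constant $1$, respectively to $t^{\beta-1}/\Gamma(\beta)$, which are trivially log-convex, so I may assume $\lambda>0$.

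The first ingredient is the classical fact that $x\mapsto E_{\alpha,\beta}(-x)$ is completely monotone on $[0,\infty)$ whenever $0<\alpha\le 1$ and $\beta\ge\alpha$; this applies here because $\alpha\le\beta\le 1$, and in particular also for $\beta=1$. I would invoke this either by citing the Bernstein-function calculus in \cite{ScV'12} or by exhibiting the nonnegative Laplace density $E_{\alpha,\beta}(-x)=\int_0^\infty e^{-xr}\phi_{\alpha,\beta}(r)\,\d r$ with $\phi_{\alpha,\beta}\ge 0$. The second ingredient is that $t\mapsto\lambda t^\alpha$ is a Bernstein function for $0<\alpha\le 1$, since it is nonnegative and its derivative $\alpha\lambda t^{\alpha-1}$ is completely monotone. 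As the composition of a completely monotone function with a Bernstein function is again completely monotone, it follows that $t\mapsto E_{\alpha,\beta}(-\lambda t^\alpha)$, and in particular the case $\beta=1$, is completely monotone on $(0,\infty)$.

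For the second function it remains to absorb the prefactor $t^{\beta-1}$. When $\beta=1$ this factor equals $1$ and there is nothing to do. When $\beta<1$, the function $t^{\beta-1}=t^{-(1-\beta)}$ is completely monotone, as seen from the representation $t^{-(1-\beta)}=\frac{1}{\Gamma(1-\beta)}\int_0^\infty s^{-\beta}e^{-st}\,\d s$ of the form \eqref{cmrep}. Because the product of two completely monotone functions is completely monotone (the closure property recalled earlier, see \cite{ScV'12}), $t\mapsto t^{\beta-1}E_{\alpha,\beta}(-\lambda t^\alpha)$ is completely monotone on $(0,\infty)$. To apply Lemma \ref{cmlc} I then check strict positivity: $E_{\alpha,\beta}(-x)>0$ for all $x\ge 0$ under the stated conditions — a completely monotone function vanishing at some point would vanish identically beyond it, contradicting that it is a nonconstant analytic function — and $t^{\beta-1}>0$; hence both functions map $(0,\infty)$ into $(0,\infty)$.

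The main obstacle I anticipate is justifying the composition step, namely that $f\circ g$ is completely monotone when $f$ is completely monotone and $g$ is Bernstein, together with the positivity and endpoint bookkeeping. To keep the argument self-contained one can bypass the abstract composition theorem by combining the representation $E_{\alpha,\beta}(-\lambda t^\alpha)=\int_0^\infty e^{-\lambda r t^\alpha}\phi_{\alpha,\beta}(r)\,\d r$ with the subordinator identity $e^{-c t^\alpha}=\int_0^\infty e^{-st}\,\d\mu_c(s)$, $\mu_c\ge 0$, and Fubini's theorem, exhibiting $E_{\alpha,\beta}(-\lambda t^\alpha)$ directly in the form \eqref{cmrep}; complete monotonicity then follows from the integral characterization and closure under multiplication and mixing. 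Finally I would note that for $\beta<1$ the prefactor blows up as $t\to 0^+$, so the log-convexity is genuinely asserted on $(0,\infty)$, whereas for $\beta=1$ — which covers the first function, with value $E_{\alpha,1}(0)=1$ — it extends continuously to all of $[0,\infty)$.
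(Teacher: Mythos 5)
Your proof is correct and follows essentially the same route as the paper: reduce both functions to completely monotone ones and invoke Lemma \ref{cmlc}. The only difference is that you derive the complete monotonicity from the Pollard--Schneider result on $E_{\alpha,\beta}(-x)$ via the Bernstein-function composition and product calculus, whereas the paper simply cites it as known from \cite{GM'97}; your endpoint caveat for $\beta<1$ at $t=0$ is a fair piece of bookkeeping that the paper glosses over.
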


\begin{proof}
It is known that $t \mapsto E_{\alpha,1}(-\lambda t^{\alpha})$ and $t \mapsto t^{\beta -1} E_{\alpha,\beta}(-\lambda t^{\alpha})$ are completely monotone on $[0,\infty)$, see, e.g., \cite[p.~268-269]{GM'97}. Since they are not identically zero, they are positive on $[0,\infty)$. By Lemma \ref{cmlc}, these functions are log-convex on $[0,\infty)$.
\end{proof}

\begin{remark}
Let $0 < \alpha \le 1$. The function $t \mapsto E_{\alpha,1}(-\lambda t^{\alpha})$ is not completely monotone for $\lambda<0$.
\end{remark}

\begin{remark}
Let $0<\alpha \le 1$ and $\beta \ge \alpha$. The function $t\mapsto E_{\alpha,\beta}(-t)$ is completely monotone on $[0,\infty)$, see \cite{Po'48} and \cite{Sc'96}. By the same argument as above we deduce that $t\mapsto E_{\alpha,\beta}(-t)$ is log-convex on $[0,\infty)$.
\end{remark}

\section{Logarithmic convexity and its consequences}\label{sec3}
In this section, we discuss the ill-posedness of the backward problem \eqref{eq1}. Then we prove Theorem \ref{thm1} and Theorem \ref{thmstab}. We start by recalling some useful facts. From Assumption I, it follows that the operator $A$ is closed and $(D(A), \langle\cdot, \cdot\rangle_{D(A)})$ is a Hilbert space where
$$
\langle x, y \rangle_{D(A)}=\langle x, y\rangle +\langle A x, A y\rangle, \quad \forall x, y \in D(A).
$$
Note that the operator $A$ generates a $C_0$-semigroup $(\mathrm{e}^{t A})_{t\ge 0}$ which is analytic of angle $\frac{\pi}{2}$ on $H$, see \cite[p.~106]{EN'00}. Moreover, there exists an orthonormal basis $\left\{\varphi_{n}\right\}_{n \in \mathbb{N}}$ in $H$ of eigenvectors of $-A$ such that $\varphi_{n} \in D(A),\left\|\varphi_{n}\right\|=1$ and $-A \varphi_{n}=\lambda_{n} \varphi_{n}, \forall n \in \mathbb{N}$, where $\left\{\lambda_{n}\right\}_{n \in \mathbb{N}} \subset \mathbb{R}$ denote the corresponding eigenvalues which can be ordered so that
$$
-\kappa \le \lambda_1 \le \cdots \le \lambda_m \le 0 < \lambda_{m+1} \le \cdots \, \rightarrow \infty.
$$

\subsection{Ill-posedness of the backward problem}
Let us consider the forward problem
\begin{empheq}[left = \empheqlbrace]{alignat=2}
\begin{aligned}
&\partial_{t}^\alpha u(t) = A u(t), && \qquad t\in (0, T), \\
& u(0)=u_0 .
\end{aligned} \label{eq2}
\end{empheq}
Following \cite{SY'11}, we can obtain the following well-posedness result by the Fourier method.
\begin{theorem}
Let $0<\alpha \le 1$ and $u_0 \in D(A)$. Then there exists a unique solution $u \in C\left([0, T]; D(A)\right)$ to \eqref{eq2} such that $\partial_{t}^{\alpha} u \in C\left([0, T] ; H\right)$. Moreover, the solution is given by
\begin{equation}\label{spec}
    u(t)=\sum_{n=1}^{\infty}\langle u_0, \varphi_{n}\rangle E_{\alpha, 1}\left(-\lambda_{n} t^{\alpha}\right) \varphi_{n}.
\end{equation}
\end{theorem}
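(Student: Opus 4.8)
The plan is to use the Fourier (spectral) method. Since $A$ is self-adjoint with compact resolvent, the orthonormal eigenbasis $\{\varphi_n\}$ with $-A\varphi_n=\lambda_n\varphi_n$ is at our disposal. I would seek the solution in the form $u(t)=\sum_n u_n(t)\varphi_n$ and project \eqref{eq2} onto each $\varphi_n$: writing $u_n(t)=\langle u(t),\varphi_n\rangle$, the equation formally reduces to the family of scalar fractional initial value problems
\[
\partial_t^\alpha u_n(t) = -\lambda_n u_n(t), \qquad u_n(0)=\langle u_0,\varphi_n\rangle .
\]
By the identity $\partial_t^\alpha E_{\alpha,1}(-\lambda t^\alpha)=-\lambda E_{\alpha,1}(-\lambda t^\alpha)$ recorded above together with $E_{\alpha,1}(0)=1$, each scalar problem is solved by $u_n(t)=\langle u_0,\varphi_n\rangle E_{\alpha,1}(-\lambda_n t^\alpha)$, which produces the candidate series \eqref{spec}.

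The first substantive step is to prove that \eqref{spec} converges in $C([0,T];D(A))$. I would split the sum into the finitely many indices $n\le m$ with $\lambda_n\le 0$ and the remaining indices with $\lambda_n>0$. The former contribute finitely many terms, each a continuous $D(A)$-valued function on $[0,T]$, causing no difficulty. For $n>m$ I would use the boundedness $0\le E_{\alpha,1}(-\lambda_n t^\alpha)\le 1$, valid for $\lambda_n t^\alpha\ge 0$ and $0<\alpha\le 1$ (a consequence of the complete monotonicity invoked in Lemma~\ref{lemlc}). This yields, for $N>M>m$,
\[
\Big\| A\sum_{n=M+1}^{N} u_n(t)\varphi_n \Big\|^2 = \sum_{n=M+1}^{N}\lambda_n^2\,|\langle u_0,\varphi_n\rangle|^2\, E_{\alpha,1}(-\lambda_n t^\alpha)^2 \le \sum_{n=M+1}^{N}\lambda_n^2\,|\langle u_0,\varphi_n\rangle|^2 ,
\]
and since $u_0\in D(A)$ the right-hand side is the tail of the convergent series $\sum_n\lambda_n^2|\langle u_0,\varphi_n\rangle|^2=\|Au_0\|^2$. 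The bound is uniform in $t\in[0,T]$, so the partial sums form a uniform Cauchy sequence in $D(A)$; as each is continuous in $t$, the limit $u$ lies in $C([0,T];D(A))$, and $u(0)=u_0$ because $E_{\alpha,1}(0)=1$.

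Next I would verify that $u$ solves \eqref{eq2} and that $\partial_t^\alpha u\in C([0,T];H)$, routing the argument through the equivalent Volterra formulation. Each scalar identity is equivalent to
\[
u_n(t)=\langle u_0,\varphi_n\rangle - \frac{\lambda_n}{\Gamma(\alpha)}\int_0^t (t-s)^{\alpha-1} u_n(s)\,\d s .
\]
Multiplying by $\varphi_n$ and summing, the uniform convergence established above (applied to the $H$-valued series $\sum_n\lambda_n u_n(t)\varphi_n=-Au(t)$, continuous since $u\in C([0,T];D(A))$) justifies interchanging summation with the fractional integral, giving the identity $u(t)=u_0+\frac{1}{\Gamma(\alpha)}\int_0^t(t-s)^{\alpha-1}Au(s)\,\d s$ in $H$. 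Applying $\partial_t^\alpha$, which inverts the fractional integral on the continuous map $Au$, yields $\partial_t^\alpha u(t)=Au(t)$, and since $Au\in C([0,T];H)$ the required regularity follows. For uniqueness, the difference $w$ of two solutions satisfies the homogeneous problem with $w(0)=0$; projecting onto $\varphi_n$ gives $\partial_t^\alpha w_n=-\lambda_n w_n$, $w_n(0)=0$, whose only solution is $w_n\equiv 0$, hence $w\equiv 0$.

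I expect the \emph{main obstacle} to be the term-by-term fractional differentiation in the third step: the Caputo derivative is nonlocal, so one cannot naively differentiate the series termwise, and convergence of $\sum_n\partial_t^\alpha u_n\,\varphi_n$ must be reconciled with $\partial_t^\alpha\sum_n u_n\varphi_n$. Passing to the Volterra integral equation, where the fractional integral is a bounded operator and interchange with a uniformly convergent series is legitimate, is precisely what circumvents this difficulty.
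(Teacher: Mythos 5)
Your proposal is correct and follows exactly the route the paper intends: the paper gives no proof of this theorem but simply invokes ``the Fourier method'' following Sakamoto--Yamamoto \cite{SY'11}, which is precisely the eigenfunction expansion, tail estimate via $\|Au_0\|$, and Volterra reformulation that you carry out. The only point worth tightening is the final inversion step: the Caputo derivative is not literally the left inverse of $J_t^{\alpha}$, so one should either check $u\in W^{1,1}((0,T);H)$ (which follows from $\|u'(t)\|\le C t^{\alpha-1}\|Au_0\|$) and use \eqref{19} together with the injectivity of $J_t^{\alpha}$ on $L^1$, or pass to the limit in the identity $\partial_t^{\alpha}u^N=Au^N$ for the partial sums.
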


From \eqref{spec}, we see that
$$u(T)=\sum_{n=1}^{\infty}\langle u_0, \varphi_{n}\rangle E_{\alpha, 1}\left(-\lambda_{n} T^{\alpha}\right) \varphi_{n}.$$
Hence, for all $n\ge 1$, we have
$$\langle u_0, \varphi_{n}\rangle E_{\alpha, 1}\left(-\lambda_{n} T^{\alpha}\right)=\langle u(T), \varphi_{n}\rangle.$$
This shows that a solution $u$ to \eqref{eq1} for $u_T\in H$ does not exist in general, and when it exists, it is unstable for non smooth data. For instance, if we choose $u_0=\frac{1}{E_{\alpha, 1}\left(-\lambda_{n} T^{\alpha}\right)} \varphi_n$, then $\|u_0\|=\frac{1}{E_{\alpha, 1}\left(-\lambda_{n} T^{\alpha}\right)}\to \infty$ as $n\to \infty$, whereas $\|u_T\|=1$. Thus, an estimate of the form $\|u_0\| \le C_1 \|u_T\|$ does not hold in general. Note that the well-posedness of the fractional backward problem ($0<\alpha <1$) holds for diffusion equations if we smooth the data $u_T$ as in \eqref{lstab}. This is considerably different from the classical parabolic case $\alpha=1$ where the problem is still unstable, even if
we adopt any Sobolev norms $\Vert u_T\Vert_{H^m(\OOO)}$ with 
arbitrary $m>0$.

\subsection{Proof of logarithmic convexity}
\begin{proof}[Proof of Theorem \ref{thm1}]
We shall use the results of Section \ref{sec2}. By the representation formula \eqref{spec}, we obtain that
\begin{align}
u(t)&=\sum_{n=1}^{m}\langle u_0, \varphi_{n}\rangle E_{\alpha, 1}\left(-\lambda_{n} t^{\alpha}\right) \varphi_{n} + \sum_{n=m+1}^{\infty}\langle u_0, \varphi_{n}\rangle E_{\alpha, 1}\left(-\lambda_{n} t^{\alpha}\right) \varphi_{n} \notag\\
& =: u_1(t) + u_2(t). \label{dceq}
\end{align}
Observe that $\|u(t)\|^{2}=\|u_1(t)\|^{2}+\|u_2(t)\|^{2}$. First, we have $\lambda_n > 0$ for $n\ge m+1$ and
$$\|u_2(t)\|^{2}=\sum_{n=m+1}^{\infty}\langle u_0, \varphi_{n}\rangle^{2} \left(E_{\alpha, 1} (-\lambda_{n} t^{\alpha})\right)^{2}.$$
In Lemma \ref{lemlc}, we have seen that the functions $t \mapsto E_{\alpha, 1} (-\lambda_{n} t^{\alpha})$ are completely on $[0,T]$ for $\lambda_n \ge 0$. By \cite[Corollary 1.6]{ScV'12}, it follows that the functions $t \mapsto \left(E_{\alpha, 1} (-\lambda_{n} t^{\alpha})\right)^{2}$ and then $t \mapsto \|u_2(t)\|^2$ are completely monotone on $[0,T]$. Thus, Lemma \ref{cmlc} implies that the function $t \mapsto \|u_2(t)\|^2$ is log-convex on $[0,T]$. Therefore
\begin{align}
    \|u_2(t)\| &\le \|u_2(0)\|^{1-\frac{t}{T}}\| u_2(T)\|^{\frac{t}{T}} \notag\\
    & \le \|u(0)\|^{1-\frac{t}{T}}\| u(T)\|^{\frac{t}{T}}. \label{lcu2}
\end{align}
On the other hand, $\displaystyle \|u_1(t)\|^{2}=\sum_{n=1}^{m}\langle u_0, 
\varphi_{n}\rangle^{2} \left(E_{\alpha, 1}
(-\lambda_{n} t^{\alpha})\right)^{2}$. 
It holds that the function $\lambda \mapsto \left(E_{\alpha, 1} (-\lambda T^{\alpha})\right)^{2}$ is non-increasing for $\lambda\le 0$. By $\lambda_1 \le \lambda_n$ for $n\le m$, we have
\begin{align*}
\Vert u_1(T)\Vert^2 &\le \max_{1\le n \le m} \vert E_{\alpha,1}(-\lambda_n
T^{\alpha})\vert^2 \sum_{n=1}^m \langle u_0, \varphi_{n}\rangle^2\\
& = \vert E_{\alpha,1}(-\lambda_1
T^{\alpha})\vert^2 \Vert u_1(0)\Vert^2\\
&=: K_1^2 \Vert u_1(0)\Vert^2.
\end{align*}
Since $\lambda_n\le 0$ for $n\le m$, the function $t \mapsto \left(E_{\alpha, 1} (-\lambda_{n} t^{\alpha})\right)^{2}$ is non-decreasing for $n\le m$. Hence, for $0\le t\le T$,
\begin{align*}
    \|u_1(t)\| &\le \|u_1(T)\| \le K_1\|u(0)\|,
\end{align*}
where the constant satisfies $K_1\ge 1$. Thus
\begin{align}
    \|u_1(t)\| &\le K_1 \|u(0)\|^{1-\frac{t}{T}}\|u_1(T)\|^{\frac{t}{T}} 
                          \notag\\
    & \le K_1 \|u(0)\|^{1-\frac{t}{T}}\| u(T)\|^{\frac{t}{T}}. \label{lcu1}
\end{align}
From \eqref{lcu2} and \eqref{lcu1}, setting $K:= K_1+1$, we obtain
$$\|u(t)\| \le K\|u(0)\|^{1-\frac{t}{T}}\|u(T)\|^{\frac{t}{T}}, \qquad 
0\le t\le T.
$$
\end{proof}

As a direct corollary of Theorem \ref{thm1}, we obtain the following backward uniqueness result.
\begin{corollary}
Let $u_T\in H$. If the solution of \eqref{eq1} satisfies $u(T)=0$, then $u(0)=0$ and hence $u(t)=0$ for all $t\in [0,T]$.
\end{corollary}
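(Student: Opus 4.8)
The plan is to read off the conclusion directly from the logarithmic convexity estimate \eqref{lceq1} of Theorem \ref{thm1}, combined with the continuity of the solution. First I would invoke \eqref{lceq1} under the hypothesis $u(T)=0$: for every $t$ with $0<t\le T$ the exponent $\frac{t}{T}$ is strictly positive, so $\|u(T)\|^{t/T}=0$, and the estimate forces $\|u(t)\|\le K\|u(0)\|^{1-t/T}\cdot 0=0$. Hence $u(t)=0$ for all $t\in(0,T]$.

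It then remains to handle the endpoint $t=0$, which the estimate does not cover directly, since there the exponent of $\|u(T)\|$ degenerates to $0$. The key observation is that the solution belongs to $C([0,T];D(A))$, so $t\mapsto u(t)$ is continuous up to $t=0$; letting $t\to 0^+$ in the identity $u(t)=0$ then yields $u(0)=\lim_{t\to 0^+}u(t)=0$. Combined with the previous step, this gives $u(t)=0$ for all $t\in[0,T]$, as claimed.

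I do not expect a serious obstacle here, as the statement is an immediate consequence of Theorem \ref{thm1}; the only subtle point is precisely that $t=0$ must be treated separately by continuity rather than by the inequality itself. As an alternative to the continuity argument, one could reason from the spectral representation \eqref{spec}: once $u(t)=0$ on $(0,T]$, each coefficient satisfies $\langle u_0,\varphi_n\rangle E_{\alpha,1}(-\lambda_n t^\alpha)=0$ for $t\in(0,T]$, and since $E_{\alpha,1}(-\lambda_n t^\alpha)$ is not identically zero (it equals $1$ at $t=0$), one concludes $\langle u_0,\varphi_n\rangle=0$ for every $n$, whence $u_0=0$.
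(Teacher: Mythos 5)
Your argument is correct and is exactly the intended one: the paper states this as a direct corollary of Theorem \ref{thm1} without writing out a proof, and your use of \eqref{lceq1} to kill $u(t)$ on $(0,T]$ followed by continuity (or, equivalently, the spectral representation \eqref{spec} together with $E_{\alpha,1}(0)=1$) to recover the endpoint $t=0$ is the natural way to fill it in. The only cosmetic remark is that continuity of $t\mapsto u(t)$ with values in $H$ (rather than $D(A)$) already suffices and holds for any admissible initial datum, so no extra regularity of $u_0$ is needed.
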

Consequently, the fractional backward system \eqref{eq1} has at most one solution.

\subsection{Proof of H\"older stability}
In order to prove Theorem \ref{thmstab} on the H\"older stability, 
we need the following lemma.
\begin{lemma}\label{dtalm}
Let $0< \alpha \le 1$. Let $u_0\in D(A)$ and $u$ be the corresponding solution to \eqref{eq1}. Then for all $t \in (0,T)$, we have
\begin{equation}\label{dtaeq}
    -\partial_t^\alpha\, \|u(t)\|^2 \le 2 \|u(t)\| \|A u_0\|.
\end{equation}
\end{lemma}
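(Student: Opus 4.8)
The plan is to reduce the estimate to a single-mode inequality through the spectral representation \eqref{spec}, to prove that inequality by hand using the elementary bound $0<E_{\alpha,1}(-\lambda t^{\alpha})\le 1$ for $\lambda\ge 0$, and then to reassemble the modes by the Cauchy--Schwarz inequality. Write $a_n=\langle u_0,\varphi_n\rangle$ and $e_n(t)=E_{\alpha,1}(-\lambda_n t^{\alpha})$, so that
$$
\|u(t)\|^2=\sum_{n=1}^{\infty}a_n^2\,e_n(t)^2,\qquad
\|u_0\|^2=\sum_{n=1}^{\infty}a_n^2,\qquad
\|Au_0\|^2=\sum_{n=1}^{\infty}\lambda_n^2 a_n^2,
$$
the last series converging precisely because $u_0\in D(A)$. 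The first step is to justify differentiating the series for $\|u(t)\|^2$ term by term under $\partial_t^\alpha$, after which everything reduces to bounding each $-\partial_t^\alpha(e_n^2)$.

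The heart of the argument is the single-mode bound $-\partial_t^\alpha\bigl(e_n(t)^2\bigr)\le 2|\lambda_n|\,e_n(t)$ for $t\in(0,T)$. For modes with $\lambda_n\le 0$ the function $e_n$ is nondecreasing (and positive), hence $\partial_t^\alpha(e_n^2)\ge 0$, so the left-hand side is $\le 0$ while the right-hand side is $\ge 0$ and the inequality is trivial. For $\lambda_n>0$ I would write the Caputo derivative in integral form,
$$
-\partial_t^\alpha\bigl(e_n^2\bigr)(t)=\frac{2}{\Gamma(1-\alpha)}\int_0^t (t-s)^{-\alpha}\,e_n(s)\bigl(-e_n'(s)\bigr)\,\mathrm{d}s,
$$
and use $0<e_n(s)\le 1$ together with $-e_n'(s)\ge 0$ to discard the factor $e_n(s)$; the remaining integral equals $-2\,\partial_t^\alpha e_n(t)=2\lambda_n e_n(t)$ by the identity $\partial_t^\alpha E_{\alpha,1}(-\lambda_n t^\alpha)=-\lambda_n E_{\alpha,1}(-\lambda_n t^\alpha)$ recalled in Section \ref{sec2}. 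The case $\alpha=1$ is the same computation with $\partial_t^\alpha$ replaced by $\tfrac{\mathrm d}{\mathrm dt}$.

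Finally I would sum over $n$, drop the nonpositive contributions coming from $\lambda_n\le 0$, and apply Cauchy--Schwarz to the remaining positive part, writing $a_n^2|\lambda_n|e_n(t)=\bigl(|a_n|e_n(t)\bigr)\bigl(|\lambda_n|\,|a_n|\bigr)$, to obtain
$$
-\partial_t^\alpha\|u(t)\|^2\le 2\sum_{\lambda_n>0}a_n^2|\lambda_n|e_n(t)
\le 2\Bigl(\sum_{n}a_n^2 e_n(t)^2\Bigr)^{1/2}\Bigl(\sum_{n}\lambda_n^2 a_n^2\Bigr)^{1/2}
=2\,\|u(t)\|\,\|Au_0\|,
$$
which is \eqref{dtaeq}. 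I expect the main obstacle to be the rigorous justification of the term-by-term fractional differentiation, because of the singularity $e_n'(s)\sim s^{\alpha-1}$ of each summand at $s=0$. This I would handle by dominated convergence: from $E_{\alpha,\alpha}(-\lambda_n s^\alpha)\le 1/\Gamma(\alpha)$ for $\lambda_n\ge 0$ one has $|e_n'(s)|\le |\lambda_n|\,s^{\alpha-1}/\Gamma(\alpha)$, so the $s$-integrand is dominated by a constant multiple of $(t-s)^{-\alpha}s^{\alpha-1}$ times $\sum_n a_n^2|\lambda_n|\le \|u_0\|\,\|Au_0\|<\infty$ (with the finitely many modes $\lambda_n<0$ treated separately), and $\int_0^t (t-s)^{-\alpha}s^{\alpha-1}\,\mathrm ds=\Gamma(\alpha)\Gamma(1-\alpha)<\infty$ supplies an integrable majorant.
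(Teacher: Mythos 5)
Your proposal is correct and follows essentially the same route as the paper's proof: the same splitting of modes by the sign of $\lambda_n$, the same key single-mode estimate $-\partial_t^\alpha\bigl(E_{\alpha,1}(-\lambda_n t^\alpha)^2\bigr)\le 2\lambda_n E_{\alpha,1}(-\lambda_n t^\alpha)$ obtained by discarding the factor $E_{\alpha,1}(-\lambda_n s^\alpha)\le 1$ inside the Caputo integral, and the same Cauchy--Schwarz reassembly. Your explicit dominated-convergence justification of the term-by-term fractional differentiation is a welcome addition that the paper leaves implicit in the lemma itself (it carries out a comparable estimate only later, when verifying $\Vert u(t)\Vert^2\in W^{1,1}(0,T)$).
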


\begin{proof}
We borrow the notations from the proof of Theorem \ref{thm1}. By the formula \eqref{dceq}, we see that
\begin{equation}\label{dta1}
    -\partial_t^\alpha\, \|u(t)\|^2 = -\partial_t^\alpha\, \|u_1(t)\|^2 - \partial_t^\alpha\, \|u_2(t)\|^2.
\end{equation}
First, we have
$$
    -\partial_t^\alpha\, \|u_1(t)\|^2 = -\sum_{n=1}^{m}\langle u_0, \varphi_{n}\rangle^{2}\, \partial_t^\alpha \left[ \left(E_{\alpha, 1} (-\lambda_{n} t^{\alpha})\right)^{2}\right].
$$
Now, assume that $0<\alpha <1$. It follows from \eqref{cap} that
\begin{align}
    -\partial_t^\alpha \left[ \left(E_{\alpha, 1} (-\lambda_{n} t^{\alpha})\right)^{2}\right] &=\frac{-2}{\Gamma(1-\alpha)} \int_0^t (t-s)^{-\alpha} E_{\alpha, 1} (-\lambda_{n} s^{\alpha})\, \partial_s \left(E_{\alpha, 1} (-\lambda_{n} s^{\alpha})\right)\, \d s \notag\\
    &=\frac{2 \lambda_n}{\Gamma(1-\alpha)} \int_0^t (t-s)^{-\alpha} s^{\alpha-1} E_{\alpha, 1} (-\lambda_{n} s^{\alpha})\, E_{\alpha, \alpha} (-\lambda_{n} s^{\alpha})\, \d s . \label{mleq}
\end{align}
Since $\lambda_n \le 0$ for all $n\le m$, we note that 
$E_{\alpha,1}(-\la_ns^{\alpha}) \ge 0$ is non-decreasing in $s>0$ and 
$s^{\alpha-1}E_{\alpha,\alpha}(-\la_ns^{\alpha}) \ge 0$ for $s\ge 0$.
Hence we obtain $-\partial_t^\alpha \left[ \left(E_{\alpha, 1}
(-\lambda_{n} t^{\alpha})\right)^{2}\right]\le 0$, and 
then $-\partial_t^\alpha\, \|u_1(t)\|^2 \le 0$. On the other hand,
\begin{equation}
    -\partial_t^\alpha\, \|u_2(t)\|^2 = -\sum_{n=m+1}^{\infty}\langle u_0, \varphi_{n}\rangle^{2}\, \partial_t^\alpha \left[ \left(E_{\alpha, 1} (-\lambda_{n} t^{\alpha})\right)^{2}\right]. \label{equ2}
\end{equation}
Since $\lambda_n\ge 0$ for all $n\ge m+1$, we have
$E_{\alpha, 1} (-\lambda_{n} s^{\alpha}) \le 1$ and 
$\partial_sE_{\alpha, 1} (-\lambda_{n} s^{\alpha}) \le 0$, and 
the equality \eqref{mleq} implies that
\begin{align*}
    -\partial_t^\alpha \left[ \left(E_{\alpha, 1} (-\lambda_{n} t^{\alpha})\right)^{2}\right] &\le \frac{-2}{\Gamma(1-\alpha)} \int_0^t (t-s)^{-\alpha} \, \partial_s \left(E_{\alpha, 1} (-\lambda_{n} s^{\alpha})\right)\, \d s\\
    & = -2 \,\partial_t^\alpha \left(E_{\alpha, 1} (-\lambda_{n} t^{\alpha})\right)\\
    &= 2 \lambda_n E_{\alpha, 1} (-\lambda_{n} t^{\alpha}).
\end{align*}
Now, the above inequality, \eqref{equ2} and the 
Cauchy-Schwarz inequality yield that
\begin{align*}
    - \partial_t^\alpha\, \|u_2(t)\|^2 & \le 2 \sum_{n=m+1}^{\infty} \left(\langle u_0, \varphi_{n}\rangle  E_{\alpha, 1} (-\lambda_{n} t^{\alpha}) \right) \left(\lambda_n\langle u_0, \varphi_{n}\rangle \right)\\
    & \le 2 \|u_2(t)\| \|A u_0\|\\
    & \le 2 \|u(t)\| \|A u_0\|.
\end{align*}
The case $\alpha=1$ can be handled by simplifying the above arguments, since in this case we have
$$\|u(t)\|^2=\sum_{n=1}^{\infty}\langle u_0, \varphi_{n}\rangle^{2}\, \mathrm{e}^{-2 \lambda_n t}.$$
Thus, the proof is completed.
\end{proof}

Let $0<\alpha \le 1$. We set
$$
J_t^{\alpha}g(t):= \frac{1}{\Gamma(\alpha)}\int^t_0 (t-s)^{\alpha-1}
g(s) \, \d s, \quad g\in L^2(0,T).
$$
\begin{proof}[Proof of Theorem \ref{thmstab}]
For $g \in W^{1,1}(0,T)$, we can readily verify 
\begin{equation}\label{19}
J_t^{\alpha}\ppp_t^{\alpha}g(t) = g(t) - g(0).
\end{equation}
Next we can prove 
\begin{equation}\label{20}
\Vert u(t)\Vert^2 \in W^{1,1}(0,T) \quad \mbox{for $u_0 \in D(A)$}.
\end{equation}
Indeed, 
\begin{align*}
& \ppp_t\Vert u(t)\Vert^2
= \sum_{n=1}^{\infty} \langle u_0, \va_n\rangle^2
\ppp_t(E_{\alpha,1}(-\la_nt^{\alpha}))^2\\
=& 2 \sum_{n=1}^{\infty} \langle u_0, \va_n\rangle^2
E_{\alpha,1}(-\la_nt^{\alpha})(-\la_n)t^{\alpha-1}
E_{\alpha,\alpha}(-\la_nt^{\alpha}), \quad t>0.
\end{align*}
Dividing the summation $\sum_{n=1}^{\infty}$ into 
$\sum_{n=1}^m$ and $\sum_{n=m+1}^{\infty}$ and using $\la_n > 0$ for
$n\ge m+1$, $\la_n \le 0$ for $1\le n \le m$, we can apply 
Theorem 1.6 (p.~35) in \cite{Pod'99} for $n\ge m+1$, and
$\vert E_{\alpha,1}(-\la_nt^{\alpha})\vert$,
$\vert E_{\alpha,\alpha}(-\la_nt^{\alpha})\vert \le C_2$ for 
$0\le t\le T$ and $1\le n \le m$, we obtain
\begin{align*}
& \vert \ppp_t\Vert u(t)\Vert^2 \vert 
\le C_3 \sum_{n=1}^m \vert \la_n\vert \langle u_0,\va_n\rangle^2t^{\alpha-1}
+ C_3 \sum_{n=m+1}^{\infty} \vert \la_n\vert \langle u_0,\va_n\rangle^2
t^{\alpha-1}\left( \frac{1}{1+\la_nt^{\alpha}}\right)^2\\
\le& C_4\left( \sum_{n=1}^{\infty} \vert \la_n\vert \langle u_0,\va_n\rangle^2
\right)t^{\alpha-1} \in L^1(0,T)
\end{align*}
by $\alpha>0$.
Thus the verification of \eqref{20} is complete.
\\

By the Sobolev embedding, we remark that $\Vert u(t)\Vert \in 
C[0,T]$. Since $J_t^{\alpha}g_1 \le J_t^{\alpha}g_2$ in $(0,T)$ for 
$g_1 \le g_2$ in $(0,T)$, we operate $J_t^{\alpha}$ to both sides of
\eqref{dtaeq}. In terms of \eqref{19}, we see
$$
\Vert u(0)\Vert^2 - \Vert u(T)\Vert^2
\le \frac{2}{\Gamma(\alpha)}\int^T_0 (T-s)^{\alpha-1}
\Vert u(s)\Vert \, \d s \, \Vert Au_0\Vert.
$$
Since $(T-s)^{\alpha-1} \in L^1(0,T)$ as function in $s$ and 
$\Vert u(s)\Vert \in C[0,T]$ by \eqref{20}, the 
mean value theorem for integral yields that there exists $\xi=\xi(u_0) 
\in (0,T)$ such that 
$$
\int^T_0 (T-s)^{\alpha-1} \Vert u(s)\Vert \,\d s
= \Vert u(\xi)\Vert\int^T_0 (T-s)^{\alpha-1} \,\d s
= \frac{T^{\alpha}}{\alpha}\Vert u(\xi)\Vert.
$$
Therefore, we obtain
$$
\|u(0)\|^{2} \le \|u(T)\|^{2} + \frac{2\, T^\alpha}{\alpha\Gamma(\alpha)}
\|u(\xi)\| \Vert Au_0\Vert.
$$
By Theorem \ref{thm1}, we have
$$
\Vert u(\xi)\Vert \le K\Vert u(0)\Vert^{1-\frac{\xi}{T}}
\Vert u(T)\Vert^{\frac{\xi}{T}} \le KR^{1-\frac{\xi}{T}}
\Vert u(T)\Vert^{\frac{\xi}{T}}.
$$
Therefore, setting $\theta := \frac{\xi}{2T}$, we reach the conclusion.
This achieves the proof.
\end{proof}

\begin{proof}[Proof of Proposition \ref{prop1}]
By $u(u_0) = u(u_0)(t)$ we denote the solution to 
$$
\partial_t^{\alpha} u(t) = Au(t), \quad u(0)=u_0.
$$
Then we know that for $u_0\in H$, there exists a unique solution 
$u(u_0) \in C([0,T];H)$ (e.g., Sakamoto and Yamamoto \cite{SY'11} for 
$0<\alpha<1$ and Pazy \cite{Pa} for $\alpha=1$).
Also, it is known that 
$$
\Vert u(u_0)(t) - u_0\Vert^2 =
\begin{cases}
\displaystyle \sum_{n=1}^{\infty} \langle u_0, \va_n \rangle^2 
(E_{\alpha,1}(-\la_nt^{\alpha}) -1)^2, & 0<\alpha<1,\\
\displaystyle \sum_{n=1}^{\infty} \langle u_0, \va_n \rangle^2 
(\mathrm{e}^{-\la_nt} -1)^2, & \alpha=1.
\end{cases}
$$
Assume that the conclusion of the theorem does not hold.
By $\theta(u_0)$, we denote the exponent $\theta$ in (5). 
Then there exist $u_0^k \in M$ such that $\lim_{k\to\infty} 
\theta(u_0^k) = 0$.
Set $\xi_k:= 2T\theta(u_0^k)$. 
Moreover, the compactness implies that a subsequence, 
still denoted by $u_0^k$, satisfies $\lim_{k\to \infty}u_0^k 
= u_0 \in H$ with some $u_0$.
In 
\begin{equation}\label{*}
\int^T_0 (T-s)^{\alpha-1}\Vert u_k(s)\Vert\, \d s = 
\Vert u_k(\xi_k)\Vert \frac{T^{\alpha}}{\alpha},
\end{equation}
we set $u_k(t):= u(u_0^k)(t)$ for $k\in \N$ and $t>0$.
We write $u=u(u_0)$.
We discuss only the case $0<\alpha<1$, and the case $\alpha=1$ can be 
discussed in the same way.
We have
\begin{align*}
& \Vert u_k(\xi_k) - u(0)\Vert
= \Vert u_k(\xi_k) - u_k(0) + u_k(0) - u(0)\Vert \\
\le &\Vert u_k(\xi_k) - u_k(0)\Vert + \Vert u_k(0) - u_0\Vert,
\end{align*}
and 
$$
\Vert u_k(\xi_k) - u_k(0)\Vert^2 
= \sum_{n=1}^{\infty} \langle u_0^k, \va_n \rangle^2 
(E_{\alpha,1}(-\la_n\xi_k^{\alpha}) -1)^2
$$
and
$$
\lim_{k\to \infty} \Vert u_k(0) -  u_0\Vert 
= \lim_{k\to \infty} \Vert u_0^k - u_0\Vert = 0.
$$
Since $\Vert Au_0^k\Vert \le R$ for all $k\in \N$, we have
\begin{align*}
& \vert \langle u_0^k, \va_n \rangle \vert
= \vert \langle A^{-1}Au_0^k, \va_n\rangle\vert 
= \vert \langle Au_0^k, A^{-1}\va_n\rangle\vert
= \left\vert \frac{1}{\la_n}\langle Au_0^k, \va_n\rangle \right\vert\\
\le& \frac{1}{\la_n}\Vert Au_0^k\Vert \Vert \va_n\Vert 
\le \frac{R}{\la_n}.
\end{align*}
Also using the assumption $\sum_{n=1}^\infty\frac{1}{\la_n^2} < \infty$, we 
can 
apply the Lebesgue convergence theorem to see that 
$\lim_{k\to\infty} \Vert u_k(\xi_k) - u_k(0)\Vert = 0$.

Therefore, letting $k \to \infty$ in \eqref{*},
we obtain
$$
\int^T_0 (T-s)^{\alpha-1}(\Vert u(s)\Vert-\Vert u(0)\Vert)\, \d s = 0.
$$
By $\la_n> 0$, using 
$$
\Vert u(s)\Vert^2 = \sum_{n=1}^{\infty} \vert \langle u_0, \va_n
\rangle\vert^2 E_{\alpha,1}(-\la_ns^{\alpha})^2
\le \sum_{n=1}^{\infty} \vert \langle u_0, \va_n \rangle\vert^2,
$$
we see that $\Vert u(s)\Vert \le \Vert u(0)\Vert$ for 
$s>0$.  Hence $\Vert u(s)\Vert = \Vert u(0)\Vert$ for all $s>0$.
Therefore the eigenvectors expansion yields
$$
\sum_{n=1}^{\infty} \vert \langle u_0, \va_n\rangle \vert^2
(1 - E_{\alpha,1}(-\la_ns^{\alpha}))^2 = 0.
$$
Since $\la_n > 0$, we have $E_{\alpha,1}(-\la_ns^{\alpha}) < 1$ for all
$n\in \N$. Therefore, $\langle u_0, \va_n\rangle = 0$ for all 
$n\in \N$, which means that $u_0 = 0$.
Since $\lim_{k\to\infty} \Vert u_0^k\Vert= \Vert u_0\Vert$ and 
$\Vert u_0^k\Vert
\ge \delta$ for $k\in \N$ by $u_0^k \in M$, we see that $0 \ge \delta$. Hence we reach a contradiction.  Thus the proof of Proposition \ref{prop1} is complete.
\end{proof}

\begin{proof}[Proof of Theorem \ref{thm3}]
We set
\begin{align*}
u(t)&=\sum_{n=1}^{m}\langle u_0, \varphi_{n}\rangle E_{\alpha, 1}\left(-\lambda_{n} t^{\alpha}\right) \varphi_{n} + \sum_{n=m+1}^{\infty}\langle u_0, \varphi_{n}\rangle E_{\alpha, 1}\left(-\lambda_{n} t^{\alpha}\right) \varphi_{n} \notag\\
& =: u_1(t) + u_2(t).
\end{align*}
Then
\begin{align*}
    \|u(0)\|^{2}&=\sum_{n=1}^{m}\langle u_0, \varphi_{n}\rangle^{2} + \sum_{n=m+1}^{\infty}\langle u_0, \varphi_{n}\rangle^{2} \\
    & =: \|u_1(0)\|^{2} + \|u_2(0)\|^{2}.
\end{align*}
First, since $\lambda_n\le 0$ for $n\le m$, the function $t \mapsto E_{\alpha, 1} (-\lambda_{n} t^{\alpha})$ is non-decreasing for $n\le m$. Then $E_{\alpha, 1} (-\lambda_{n} t^{\alpha}) \ge 1$ for $n\le m$ and $\|u_1(0)\|^{2} \le \|u_1(T)\|^{2} \le \|u(T)\|^{2}$. On the other hand, by using the 
H\"older inequality we obtain
\begin{align*}
    \|u_2(0)\|^{2}&= \sum_{n=m+1}^{\infty}\langle u_0, \varphi_{n}\rangle^{2} \\
    & = \sum_{n=m+1}^{\infty} \frac{\langle u(T), \varphi_{n}\rangle^{2}}{\left(E_{\alpha, 1}\left(-\lambda_{n} T^{\alpha}\right)\right)^2}\\
    & = \sum_{n=m+1}^{\infty} \frac{\langle u(T), \varphi_{n}\rangle^{2(1-\beta)}}{\left(E_{\alpha, 1}\left(-\lambda_{n} T^{\alpha}\right)\right)^{2}} \; \langle u(T), \varphi_{n}\rangle^{2\beta}\\
    & \le \left(\sum_{n=m+1}^{\infty} \frac{\langle u(T), \varphi_{n}\rangle^{2}}{\left(E_{\alpha, 1}\left(-\lambda_{n} T^{\alpha}\right)\right)^{\frac{2}{1-\beta}}}\right)^{1-\beta} \left(\sum_{n=m+1}^{\infty} \langle u(T), \varphi_{n}\rangle^{2}\right)^\beta\\
    & \le \left(\sum_{n=m+1}^{\infty} \frac{\langle u_0, \varphi_{n}\rangle^{2}}{\left(E_{\alpha, 1}\left(-\lambda_{n} T^{\alpha}\right)\right)^{\frac{2\beta}{1-\beta}}}\right)^{1-\beta} \|u(T)\|^{2\beta}.
\end{align*}
Since $-\lambda_{n} T^{\alpha} \le 0$ for $n\ge m+1$ and $0<\alpha<1$, 
by Theorem 1.4 (p.~33-34) in \cite{Pod'99}, we see
$$
E_{\alpha,1}(-\la_nT^{\alpha}) = \frac{1}{\Gamma(1-\alpha)\la_nT^{\alpha}}
+ O\left( \frac{1}{\la_n^2}\right)
$$
for all large $n \in \N$.  Consequently we can prove
$$
\left(E_{\alpha, 1}\left(-\lambda_{n} T^{\alpha}\right)\right)^{-1} \le C_1 (1+\lambda_n T^\alpha)
$$
for some constant $C_1>0$.  See also \cite[Corollary 2.2]{LY'10}.

Setting $C_2=C_1\left(\frac{1}{\lambda_{m+1}}+T^\alpha\right)$, we obtain
$$
\left(E_{\alpha, 1}\left(-\lambda_{n} T^{\alpha}\right)\right)^{-1} 
\le C_2\lambda_n
$$
for $n \ge m+1$. Therefore
\begin{align*}
    \|u_2(0)\|^{2}& \le C_2^{2\beta} \left(\sum_{n=m+1}^{\infty} \langle u_0, \varphi_{n}\rangle^{2} \lambda_n^{\frac{2\beta}{1-\beta}}\right)^{1-\beta} \|u(T)\|^{2\beta}\\
    & \le C_2^{2\beta} \|u_0\|_{D((\kappa-A)^\epsilon)}^{2(1-\beta)} \|u(T)\|^{2\beta},
\end{align*}
since $\epsilon=\frac{\beta}{1-\beta}$. We assume without loss of generality that $\|u(T)\| \le 1$. Hence
\begin{align*}
    \|u(0)\| & \le \|u_1(0)\| + \|u_2(0)\|\\
    & \le \|u(T)\| + C_2^{\beta} R^{1-\beta} \|u(T)\|^{\beta}\\
    & \le \left(1+ C_2^{\beta} R^{1-\beta}\right) \|u(T)\|^{\beta}.
\end{align*}
\end{proof}

\begin{remark}
Setting $\epsilon=1$, i.e., $\beta=\frac{1}{2}$ in Theorem \ref{thm3}, we obtain a similar result to Theorem \ref{thmstab} for $0<\alpha<1$, with $\theta=\frac{1}{2}$ and a different constant. 
Note that Theorem \ref{thm3} does not cover $\alpha=1$, since we have used the 
inequality: for $0<\alpha<1$, there exists a constant $c>0$ such that 
$(1-x) E_{\alpha,1}(x) \ge c>0$ for all $x\le 0$, which does not hold 
for $\alpha=1$.
\end{remark}

\subsection{Backward problems from noisy data}
The purpose of this part is to reconstruct the solution $u(t)$ for $0<t<T$ to the following problem
\begin{empheq}[left = \empheqlbrace]{alignat=2}
\begin{aligned}
&\partial_{t}^\alpha u(t) = A u(t) + f(t), && \qquad t\in (0, T), \\
& u(T)=u_T,
\end{aligned} \label{eq3}
\end{empheq}
from a noisy measurement $u_T^\delta$ of the exact final datum $u(T)$, i.e.,
\begin{equation*}
    \|u(T)-u_T^\delta\| \le \delta,
\end{equation*}
where $\delta>0$ is some known measurement error. Here the source term $f$ is assumed to be known. Then an approximate solution to the original problem is given by
\begin{empheq}[left = \empheqlbrace]{alignat=2}
\begin{aligned}
&\partial_{t}^\alpha u^\delta(t) = A u^\delta(t) + f(t), && t\in (0, T), \\
& u^\delta(T)=u_T^\delta.
\end{aligned} \label{eq4}
\end{empheq}
Let $R>0$ be a known a priori bound. Let us set the following class of solutions
\begin{equation}
    \mathcal{M}=\left\{u \in C([0,T]; H) \colon \|u(0)\| \le R \right\}.
\end{equation}
Restricted to the class $\mathcal{M}$, the logarithmic convexity estimate \eqref{lceq1} yields a H\"older continuous dependence of the solution $u(t)$, $0<t<T$, with respect to the data. More precisely, we have the following result.
\begin{theorem}
Let $u$ and $u^\delta$ be the solutions 
to \eqref{eq3} and \eqref{eq4} respectively such that $u, u^{\delta}
\in \mathcal{M}$. Then
\begin{equation}
    \|u(t)-u^\delta(t)\| \le 2 K R^{1-\frac{t}{T}} \delta^{\frac{t}{T}}, \qquad 0\le t \le T.
\end{equation}
\end{theorem}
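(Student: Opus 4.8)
The plan is to reduce the problem to the homogeneous backward equation by passing to the difference of the two solutions. First I would set $w := u - u^\delta$ and subtract \eqref{eq4} from \eqref{eq3}. Since both equations carry the \emph{same} source term $f$, the inhomogeneity cancels, and $w$ satisfies $\partial_t^\alpha w(t) = A w(t)$ on $(0,T)$ together with the final condition $w(T) = u(T) - u_T^\delta$. In other words, $w$ solves exactly a backward problem of the form \eqref{eq1}, so Theorem \ref{thm1} applies to $w$ directly. This cancellation of $f$ is the crux of the argument.

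Next I would record the two endpoint estimates. At the final time, the measurement hypothesis gives $\|w(T)\| = \|u(T) - u_T^\delta\| \le \delta$. At the initial time, membership $u, u^\delta \in \mathcal{M}$ yields $\|u(0)\|, \|u^\delta(0)\| \le R$, so by the triangle inequality $\|w(0)\| \le \|u(0)\| + \|u^\delta(0)\| \le 2R$.

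Then I would invoke the logarithmic convexity estimate \eqref{lceq1} for $w$ and insert the two endpoint bounds:
\[
\|w(t)\| \le K \|w(0)\|^{1-\frac{t}{T}} \|w(T)\|^{\frac{t}{T}} \le K (2R)^{1-\frac{t}{T}} \delta^{\frac{t}{T}}, \qquad 0 \le t \le T.
\]
Finally, since $1 - \frac{t}{T} \in [0,1]$ for $0 \le t \le T$, one has $2^{1-\frac{t}{T}} \le 2$, whence $(2R)^{1-\frac{t}{T}} = 2^{1-\frac{t}{T}} R^{1-\frac{t}{T}} \le 2 R^{1-\frac{t}{T}}$, which gives the claimed bound $\|u(t) - u^\delta(t)\| \le 2K R^{1-\frac{t}{T}} \delta^{\frac{t}{T}}$.

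I do not expect any serious obstacle: the only point requiring a line of care is checking that $w$ is again an admissible solution of the homogeneous problem so that Theorem \ref{thm1} is legitimately applicable, and after that the estimate is pure triangle inequality together with the elementary bound $2^{1-t/T} \le 2$. Conceptually, the resulting inequality is a H\"older-type continuous dependence that interpolates between the $O(1)$ a priori control at $t=0$ and the $O(\delta)$ data error at $t=T$, degenerating—as expected for an ill-posed backward problem—to no control at $t=0$.
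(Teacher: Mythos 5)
Your proposal is correct and follows the paper's own argument exactly: pass to the difference $v=u-u^\delta$, observe that the source $f$ cancels so $v$ solves the homogeneous backward problem \eqref{eq1}, and apply the logarithmic convexity estimate \eqref{lceq1} with $\|v(0)\|\le 2R$ and $\|v(T)\|\le\delta$. Your extra line converting $(2R)^{1-t/T}$ into $2R^{1-t/T}$ via $2^{1-t/T}\le 2$ is a small but welcome clarification that the paper leaves implicit.
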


\begin{proof}
If $v(t)=u(t)-u^\delta(t)$, then $v$ is a solution to \eqref{eq1} corresponding to the final datum $v(T)=u(T)-u_T^\delta$ and the initial datum $v(0)=u_0-u_0^\delta$. Since $\|v(0)\| \le 2 R$, the estimate \eqref{lceq1} applied to $v$ yields the desired result.
\end{proof}

As a widely used numerical method for reconstructing 
an initial value $u_0$ from noisy data at $t=T$, we can refer to 
the Tikhonov regularization, where one important issue is choices of 
the regularizing parameters according to given noise level 
$\delta>0$.  The conditional stability Theorem \ref{thm3} provides 
a choice principle for quasi-optimal convergence of the scheme, and 
as for the details, see Cheng and Yamamoto \cite{CY'00}. 

\section{Applications} \label{sec4}
In this section, we present some various examples of time fractional systems that fit in our abstract framework and for which we can immediately apply 
Theorems \ref{thm1}, \ref{thmstab} and \ref{thm3}.

\subsection{Fractional uniformly elliptic equations}\label{sec4.1}
Henceforth, $\Omega\subset \mathbb{R}^N$ is a bounded domain with boundary $\partial \Omega$ of class $C^2$. We consider the backward problem with Dirichlet or Neumann boundary conditions:
\begin{empheq}[left = \empheqlbrace]{alignat=2}
\begin{aligned}
&\partial_{t}^\alpha u(t,x) = A u(t,x), && \qquad\text { in }  (0, T) \times \Omega, \\
&\begin{cases}u\rvert_{\partial \Omega} =0, &\quad\text { on } (0, T) \times \partial \Omega \;\text{ (Dirichlet case) or}\\
\partial_\nu^A u =0, &\quad\text { on } (0, T) \times \partial \Omega \;\,\text{ (Neumann case)},
    \end{cases}\\
& u(T,x)=u_T(x) && \qquad \text{ in } \Omega,
\end{aligned} \label{eqe1}
\end{empheq}
where the state space is $H=L^2(\Omega)$, and $A$ is a symmetric uniformly elliptic operator, i.e.,
\begin{equation}\label{opdef}
Au\, (x) := \sum_{i,j=1}^N \partial_i(a_{ij}(x)\partial_j u)(x) + p(x) u,
\end{equation}
such that
$$
a_{ij} = a_{ji} \in C^1(\overline{\Omega}),
\quad 1\le i,j \le N, \qquad  p \in L^\infty(\Omega),
$$
and there exists a constant $\kappa>0$ such that 
$$
\sum_{i,j=1}^N a_{ij}(x)\xi_i\xi_j \ge \kappa \sum_{j=1}^N \xi_j^2,
\quad x \in \overline{\Omega}, \; (\xi_1, ..., \xi_N) \in \mathbb{R}^N.
$$
The unit outwards normal vector to $\partial \Omega$ is denoted by $\nu$ and the conormal derivative with respect to $A$ by
$$\partial_{\nu}^{A} u=\sum_{i, j=1}^{N} a_{i j} \nu_{j} (\partial_{i} u)_{|\partial \Omega}.$$
The domain of the operator $A$ is given by
\begin{equation} \label{domop}
    D(A)= \begin{cases}
H^2(\Omega) \cap H^1_0(\Omega), & (\text{Dirichlet case) or}\\
\left\{u\in H^2(\Omega) \colon \partial_{\nu}^{A} u=0 \right\} & 
(\text{Neumann case}).
\end{cases}
\end{equation}
It is known that the operator $A$ satisfies Assumption I. Indeed, it is self-adjoint and bounded above thanks to Green's formula. Since the embedding $D(A) \hookrightarrow L^2(\Omega)$ is compact, $A$ has compact resolvent, see \cite[Proposition 4.25]{EN'00}.
Then Theorem \ref{thmstab} yields
\begin{corollary}\label{cor2}
Let $u_0\in \mathcal{I}_R$ and let $u(t, x)$ be the corresponding solution to \eqref{eqe1}. Then there exists a constant $\theta \in (0,\frac{1}{2})$ such that
\begin{equation*}
    \|u(0,\cdot)\|_{L^2(\Omega)} \le K R^{1-\theta}\sqrt{1 + \frac{2 \,  T^\alpha}{\alpha\Gamma(\alpha)}} \|u(T,\cdot)\|_{L^2(\Omega)}^\theta.
\end{equation*}
\end{corollary}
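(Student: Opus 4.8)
The plan is to specialize Theorem \ref{thmstab} to the concrete operator $A$ of \eqref{opdef} and then carry out a short algebraic reduction. First I would confirm that the elliptic operator $A$ with domain \eqref{domop} satisfies Assumption I, as already recalled in the text: self-adjointness and the bounded-above property follow from Green's formula (giving $\langle Au, u\rangle \le \Vert p\Vert_{L^\infty(\OOO)}\Vert u\Vert^2$), and the compactness of the resolvent follows from the compact embedding $H^2(\OOO)\cap H^1_0(\OOO)\hookrightarrow L^2(\OOO)$ (respectively its Neumann analogue). Since here $H=L^2(\OOO)$ and the admissible class is exactly $\mathcal{I}_R$, Theorem \ref{thmstab} applies directly and furnishes, for each $u_0\in\mathcal{I}_R$, an exponent $\theta=\theta(u_0)\in(0,\tfrac12)$ for which \eqref{hstab} holds; this is the $\theta$ that appears in the corollary.

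Next I would square \eqref{hstab} to obtain
\begin{equation*}
\|u_0\|^2 \le \|u(T)\|^{2\theta}\left(\|u(T)\|^{2-2\theta} + KR^{2-2\theta}\frac{2T^\alpha}{\alpha\Gamma(\alpha)}\right),
\end{equation*}
and the only real task is to control the leftover factor $\|u(T)\|^{2-2\theta}$ in terms of $R$. The representation \eqref{spec} splits $u(T)$ into the finite low-mode part $u_1(T)$ and the high-mode part $u_2(T)$. For $n\ge m+1$ one has $E_{\alpha,1}(-\lambda_nT^\alpha)\le 1$, so $\|u_2(T)\|\le\|u_2(0)\|$; for $n\le m$ the growth is bounded by $K_1=E_{\alpha,1}(-\lambda_1T^\alpha)\ge 1$, the same constant occurring in the proof of Theorem \ref{thm1}. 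Combining these and using $K_1\ge 1$ gives $\|u(T)\|^2\le K_1^2\|u(0)\|^2\le K_1^2R^2$, hence the a priori control $\|u(T)\|\le K_1R\le KR$.

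Finally I would absorb the constants. Since $0<2-2\theta\le 2$ and $K_1\le K$ with $K\ge 1$, we have $\|u(T)\|^{2-2\theta}\le K_1^2R^{2-2\theta}\le K^2R^{2-2\theta}$, and likewise $K\le K^2$, so that
\begin{equation*}
\|u_0\|^2 \le \|u(T)\|^{2\theta}\,K^2R^{2-2\theta}\left(1 + \frac{2T^\alpha}{\alpha\Gamma(\alpha)}\right).
\end{equation*}
Taking square roots yields the asserted estimate. I expect no genuine obstacle: the statement is essentially a specialization of Theorem \ref{thmstab}, and the only point demanding care is the bookkeeping that merges the two distinct constants under the square root of \eqref{hstab} into the clean prefactor $KR^{1-\theta}\sqrt{1+2T^\alpha/(\alpha\Gamma(\alpha))}$ — in particular the a priori bound $\|u(T)\|\le KR$, which is what allows the first term under the square root to be reabsorbed into $R^{2-2\theta}$.
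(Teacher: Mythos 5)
Your proposal is correct and follows exactly the route the paper intends: the paper simply states that Theorem \ref{thmstab} yields Corollary \ref{cor2} after verifying Assumption I for the elliptic operator, and your bookkeeping (the a priori bound $\|u(T)\|\le K_1R\le KR$ from the spectral representation, plus $K\ge 1$ to absorb the constants into $K^2R^{2-2\theta}$) is precisely the omitted algebra. No gaps.
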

We can apply Theorem \ref{thm3}, but we omit the statements of stability estimates
which Theorem \ref{thm3} brings in this and the succeeding cases.

\subsection{Fractional advection-diffusion equation}
Let $l>0$, $b \in \mathbb{R}$ and $d>0$. We consider the following one-dimensional problem
\begin{equation}\label{adeq}
    \begin{cases}
    \partial^{\alpha}_t u(t,x)=d u_{xx}(t,x) -b u_{x}(t,x), & (t,x)\in (0,T) 
    \times (0,l),\\
    u(t,0)=u(t,l)=0, & t\in (0,T),\\
    u(T, x)=u_T(x), & x\in (0,l).
    \end{cases}
\end{equation}
Although the equation \eqref{adeq} is not symmetric, it can be reduced to the form \eqref{eqe1}. Let us make the change of variables $\xi=\frac{x}{\sqrt{d}}$ and $u(t,x)=v(t,\xi) \,\mathrm{e}^{\frac{b\xi}{2\sqrt{d}}}$. By a simple calculation, \eqref{adeq} can be transformed to the following equation
\begin{equation}\label{adeq1}
    \begin{cases}
    \partial^{\alpha}_t v(t,\xi)=v_{\xi \xi}(t,\xi) + p v(t,\xi), & (t,\xi)\in (0,T) \times (0,\ell),\\
    v(t,0)=v(t,\ell)=0, & t\in (0,T),\\
    v(T, \xi)=u_{T}(\xi\sqrt{d}) \,\mathrm{e}^{\frac{-b\xi}{2\sqrt{d}}}, & x\in (0,\ell),
    \end{cases}
\end{equation}
where $p=-\frac{b^2}{4d}$ and $\ell=\frac{l}{\sqrt{d}}$. 
Here we note $v(0,\xi) = u(0, \xi\sqrt{d})\,\mathrm{e}^{\frac{-b\xi}{2\sqrt{d}}}$.
From Corollary \ref{cor2} applied to \eqref{adeq1}, we obtain
\begin{corollary}
Let $u_0\in \mathcal{I}_R$ and let $u(t, x)$ be the corresponding solution to \eqref{adeq}. Then there exist $\theta \in (0,\frac{1}{2})$ and a positive constant $C=C(b,l,d,\theta)$ such that
\begin{equation*}
    \|u(0,\cdot)\|_{L^2(0,l)} \le CK R^{1-\theta}\sqrt{1 + \frac{2 \,  T^\alpha}{\alpha\Gamma(\alpha)}} \|u(T,\cdot)\|_{L^2(0,l)}^\theta.
\end{equation*}
\end{corollary}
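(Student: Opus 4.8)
The plan is to exploit the change of variables already introduced before the statement, which maps the non-symmetric problem \eqref{adeq} to the symmetric problem \eqref{adeq1} falling under the framework of \eqref{eqe1}, and then to invoke Corollary \ref{cor2} for $v$. The only genuine work is to track how the $L^2$ norms and the admissible bound transform, so that the estimate obtained for $v$ can be rewritten as an estimate for $u$.

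First I would record the norm relation. With $\xi = x/\sqrt{d}$ and $u(t,x) = v(t,\xi)\,\mathrm{e}^{b\xi/(2\sqrt{d})}$, the change of variables $x=\xi\sqrt{d}$ gives
$$
\|u(t,\cdot)\|_{L^2(0,l)}^2 = \sqrt{d}\int_0^\ell |v(t,\xi)|^2\, \mathrm{e}^{b\xi/\sqrt{d}}\, \d\xi .
$$
Since $\ell = l/\sqrt{d}$ is finite and $b,d$ are fixed, the weight $\mathrm{e}^{b\xi/\sqrt{d}}$ is bounded above and below by positive constants on $[0,\ell]$ depending only on $b,l,d$. Hence there are $0 < c_1 \le c_2$, depending only on $b,l,d$, with
$$
c_1 \|v(t,\cdot)\|_{L^2(0,\ell)} \le \|u(t,\cdot)\|_{L^2(0,l)} \le c_2 \|v(t,\cdot)\|_{L^2(0,\ell)}, \qquad 0\le t\le T .
$$

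Next I would verify that the transformation respects the admissible class. Writing $\tilde A v = v_{\xi\xi} + p v$ for the transformed operator, the identity $\partial_t^\alpha u = A u$ converts \emph{exactly} into $\partial_t^\alpha v = \tilde A v$, so the function $A u_0$ corresponds to $\tilde A v_0$ under the very same change of variables. Applying the norm equivalence above to the pair $A u_0 \leftrightarrow \tilde A v_0$ shows that $\|\tilde A v_0\|_{L^2(0,\ell)}$ and $\|v_0\|_{L^2(0,\ell)}$ are controlled by $\|Au_0\|_{L^2(0,l)}$ and $\|u_0\|_{L^2(0,l)}$, respectively. Consequently $u_0 \in \mathcal{I}_R$ for the operator in \eqref{adeq} forces $v_0 \in \mathcal{I}_{R'}$ for the operator in \eqref{adeq1}, with $R' = C' R$ and $C' = C'(b,l,d)$.

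Finally I would apply Corollary \ref{cor2} to the symmetric problem \eqref{adeq1} to obtain
$$
\|v(0,\cdot)\|_{L^2(0,\ell)} \le K (R')^{1-\theta}\sqrt{1 + \tfrac{2T^\alpha}{\alpha\Gamma(\alpha)}}\,\|v(T,\cdot)\|_{L^2(0,\ell)}^\theta ,
$$
and then convert both sides back to $u$ via the norm equivalence, absorbing $c_1,c_2,C'$ and the factor $(C')^{1-\theta}$ into a single constant $C=C(b,l,d,\theta)$. This yields the claimed inequality. I do not expect a substantive obstacle here: the argument is essentially bookkeeping of transformation constants. The only points requiring slight care are confirming that replacing $R$ by $C'R$ preserves the $R^{1-\theta}$ structure (contributing only the harmless factor $(C')^{1-\theta}$) and that the exponent $\theta\in(0,\tfrac{1}{2})$ produced by Corollary \ref{cor2} is carried over to $u$ unchanged.
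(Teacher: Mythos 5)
Your proposal is correct and is essentially the paper's own argument: the paper reduces \eqref{adeq} to the symmetric problem \eqref{adeq1} via the same change of variables $\xi=x/\sqrt{d}$, $u=v\,\mathrm{e}^{b\xi/(2\sqrt{d})}$ and then applies Corollary \ref{cor2}, with the constant $C(b,l,d,\theta)$ absorbing exactly the weight/Jacobian factors and the rescaled a priori bound $R'=C'R$ that you track explicitly. Your write-up simply supplies the bookkeeping the paper leaves implicit.
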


\subsection{Space-time fractional diffusion equations}
Let $0<s<1$. We consider the backward problem
\begin{empheq}[left = \empheqlbrace]{alignat=2}
\begin{aligned}
&\partial_{t}^\alpha u(t,x) = -(-A_0)^s u(t,x), && \qquad\text { in }  (0, T) \times \Omega, \\
& u\rvert_{\partial \Omega} =0, && \qquad\text { on } (0, T) \times \partial \Omega, \\
& u(T,x)=u_T(x) && \qquad \text{ in } \Omega,
\end{aligned} \label{steq1}
\end{empheq}
where (as in \eqref{opdef}) $A_0$ is a symmetric uniformly elliptic operator given by
\begin{equation*}
A_0 u\, (x) := \sum_{i,j=1}^N \partial_i(a_{ij}(x)\partial_j u)(x), \qquad D(A_0)= H^2(\Omega) \cap H^1_0(\Omega).
\end{equation*}
The spectrum $\sigma(-A_{0})$ consists of eigenvalues with finite multiplicities so that we can write
$$
0<\lambda_{1} \le \lambda_{2} \leq \lambda_{3}<\cdots \rightarrow \infty .
$$
Let $\left\{\varphi_{n}\right\}_{n \in \mathbb{N}}$ be an orthonormal basis of associated eigenfunctions in $L^2(\Omega)$. Then the spectral fractional power $(-A_0)^s$ is defined by
$$(-A_0)^s u=\sum_{n=1}^\infty \lambda_n^s \langle u, \varphi_n \rangle_{L^2(\Omega)} \varphi_n,$$
with domain
$$D((-A_0)^s)=\left\{u\in L^2(\Omega) \colon \sum_{n=1}^\infty \lambda_n^{2s} \langle u, \varphi_n\rangle_{L^2(\Omega)}^2 < \infty\right\},$$
endowed with the norm $\|u\|_{D((-A_0)^s)}=\|(-A_0)^s u\|_{L^2(\Omega)}$ for all $u \in D((-A_0)^s)$. From the definition, it can be proved that the operator $-(-A_0)^s$ satisfies Assumption I. Hence, we have the following estimate.
\begin{corollary}
Let $u_0\in \mathcal{I}_R$ and let $u(t, x)$ be the corresponding solution to \eqref{steq1}. Then there exists a constant $\theta \in (0,\frac{1}{2})$ such that
\begin{equation*}
    \|u(0,\cdot)\|_{L^2(\Omega)} \le K R^{1-\theta}\sqrt{1 + \frac{2 \,  T^\alpha}{\alpha\Gamma(\alpha)}} \|u(T,\cdot)\|_{L^2(\Omega)}^\theta.
\end{equation*}
\end{corollary}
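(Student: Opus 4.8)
The plan is to recognize this estimate as a direct instance of Theorem~\ref{thmstab}, packaged exactly as in Corollary~\ref{cor2}, so that the whole task reduces to checking that the operator $A := -(-A_0)^s$ satisfies Assumption I on $H = L^2(\Omega)$. First I would record the spectral data: since $A_0$ is symmetric uniformly elliptic with zero Dirichlet condition, it is self-adjoint with compact resolvent and has eigenvalues $0 < \lambda_1 \le \lambda_2 \le \cdots \to \infty$ together with an orthonormal eigenbasis $\{\varphi_n\}$. The spectral definition of $(-A_0)^s$ then shows at once that $A = -(-A_0)^s$ is self-adjoint on $H$, with eigenvalues $-\lambda_n^s$ and the same eigenfunctions $\varphi_n$; this is Assumption I(i).

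Next I would check the remaining two conditions. Because every eigenvalue $-\lambda_n^s$ is strictly negative, for any $u \in D(A)$ we have $\langle A u, u\rangle = -\sum_{n=1}^\infty \lambda_n^s \langle u, \varphi_n\rangle^2 \le 0$, so $A$ is bounded above with $\kappa = 0$, which gives Assumption I(ii). Finally, since $\lambda_n^s \to \infty$, for any admissible $\mu$ the operator $(\mu I - A)^{-1}$ is diagonal in the basis $\{\varphi_n\}$ with eigenvalues $(\mu + \lambda_n^s)^{-1} \to 0$; hence it is compact, being the norm limit of its finite-rank truncations, which is Assumption I(iii).

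With Assumption I verified, Theorem~\ref{thmstab} applies verbatim to \eqref{steq1} and yields an exponent $\theta \in (0, \tfrac12)$ with $\|u(0)\| \le \|u(T)\|^{\theta}\sqrt{\|u(T)\|^{2-2\theta} + K R^{2-2\theta}\frac{2 T^\alpha}{\alpha\Gamma(\alpha)}}$. To pass to the stated form I would argue as in Corollary~\ref{cor2}: since $\kappa = 0$, each factor $E_{\alpha,1}(-\lambda_n^s t^\alpha)$ lies in $(0,1]$ and is non-increasing in $t$, so $t \mapsto \|u(t)\|$ is non-increasing and $\|u(T)\| \le \|u(0)\| \le R$. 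Substituting $\|u(T)\|^{2-2\theta} \le R^{2-2\theta}$ and using $K \ge 1$ to bound both summands under the square root by $K^2 R^{2-2\theta}$ times $1$ and $\frac{2 T^\alpha}{\alpha\Gamma(\alpha)}$ respectively, then extracting the root, gives the claimed inequality. Indeed, since $\kappa = 0$ one may take $K = 1$ by Theorem~\ref{thm1}, so the constant $K$ appearing in the statement is harmless here.

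I do not expect a genuine obstacle: the substance is entirely the verification of Assumption I for the spectral fractional power $-(-A_0)^s$, and the only mildly delicate point is compactness of the resolvent, which nonetheless follows immediately from the divergence $\lambda_n^s \to \infty$ of the spectrum together with the orthonormal eigenbasis. Everything after that is the identical reduction that produced Corollary~\ref{cor2}.
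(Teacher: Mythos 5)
Your proposal is correct and follows exactly the route the paper intends: verify Assumption I for $A=-(-A_0)^s$ via its spectral definition (self-adjointness, negativity of the eigenvalues $-\lambda_n^s$ giving $\kappa=0$, and compactness of the resolvent from $\lambda_n^s\to\infty$), then apply Theorem~\ref{thmstab} and repackage the bound as in Corollary~\ref{cor2} using $\|u(T)\|\le\|u(0)\|\le R$ and $K\ge 1$. The paper leaves all of this implicit in a single sentence, and your write-up simply supplies those details correctly.
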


\subsection{Fractional coupled diffusion systems}
We consider the coupled system of diffusion equations with Dirichlet or Neumann boundary conditions:
\begin{empheq}[left = \empheqlbrace]{alignat=2}
{\small\begin{aligned}
&\partial_{t}^\alpha u_1(t,x) = A_0^1 u_1(t,x)+\sum_{j=1}^N c_{1j}(x) u_j (t,x), && \text{in }  (0, T) \times \Omega, \\
&\partial_{t}^\alpha u_2(t,x) = A_0^2 u_2(t,x)+\sum_{j=1}^N c_{2j}(x) u_j (t,x), && \text{in }  (0, T) \times \Omega, \\
&\cdots\\
&\partial_{t}^\alpha u_N(t,x) = A_0^N u_N(t,x)+\sum_{j=1}^N c_{Nj}(x) u_j (t,x), && \text{in }  (0, T) \times \Omega, \\
&\begin{cases}u_i\rvert_{\partial \Omega} =0, &\text {on } (0, T) \times \partial \Omega \;\text{ (Dirichlet case)}, 
\hspace{0.1cm} 1\le i \le N, \text{ or}\\
\partial_\nu^{A_0^i} u_i =0, &\text {on } (0, T) \times \partial \Omega \;\,\text{ (Neumann case)}, \;\, 1\le i\le N,
    \end{cases}\\
& u_i(T,x)=u_T^i(x), \hspace{1.6cm} 1\le i \le N,  && \text{in } \Omega,
\end{aligned}} \label{seqe1}
\end{empheq}
where, for every $1\le k\le N$, $A_0^k$ is a symmetric uniformly elliptic operator,
\begin{equation*}
A_0^k u := \sum_{i,j=1}^N \partial_i(a_{ij}^k(x)\partial_j u),
\end{equation*}
such that
$$
a_{ij}^k = a_{ji}^k \in C^1(\overline{\Omega}),
\quad 1\le i,j \le N, \qquad  c_{ij} \in L^\infty(\Omega), \; c_{ij}(\cdot)=c_{ji}(\cdot),
$$
and there exists a constant constant $\kappa_k>0$ such that 
$$
\sum_{i,j=1}^N a_{ij}^k(x)\xi_i\xi_j \ge \kappa_k \sum_{j=1}^N \xi_j^2,
\quad x \in \overline{\Omega}, \; (\xi_1, ..., \xi_N) \in \mathbb{R}^N.
$$
Moreover, the domain of each operator $A_0^k$ is given by \eqref{domop}. The system \eqref{seqe1} can be written as
\begin{empheq}[left = \empheqlbrace]{alignat=2}
\begin{aligned}
&\partial_{t}^\alpha \mathbf{u}(t) = \mathcal{A}_0 \mathbf{u}(t) + \mathbf{C} \mathbf{u}(t), && \qquad  t\in (0, T), \\
& \mathbf{u}(T)=\mathbf{u}_T, \nonumber
\end{aligned}
\end{empheq}
where the state space is $H=L^2(\Omega;\mathbb{R}^N)$, 
$\mathbf{u}_T=(u_T^i)_{1\le i\le N}$, $\mathbf{u}=(u_i)_{1\le i\le N}$, $\mathbf{C}(\cdot)=(c_{ij}(\cdot))_{1\le i,j\le N} \in L^\infty(\Omega; \mathcal{L}(\mathbb{R}^N))$ and $\mathcal{A}_0=\mathrm{diag}(A_0^1, \cdots, A_0^N)$.

We can directly prove that the operator $\mathcal{A}=\mathcal{A}_0 +\mathbf{C}$ satisfies Assumption I. Consequently, we have
\begin{corollary}\label{scor2}
Let $\mathbf{u}_0\in \mathcal{I}_R$ and let $\mathbf{u}(t)$ be the corresponding solution to \eqref{seqe1}. Then there exists a constant $\theta \in (0,\frac{1}{2})$ such that
\begin{equation*}
    \|\mathbf{u}(0)\|_{L^2(\Omega;\mathbb{R}^N)} \le K R^{1-\theta}\sqrt{1 + \frac{2 \,  T^\alpha}{\alpha\Gamma(\alpha)}} \|\mathbf{u}(T)\|_{L^2(\Omega;\mathbb{R}^N)}^\theta.
\end{equation*}
\end{corollary}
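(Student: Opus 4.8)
The plan is to deduce Corollary~\ref{scor2} from Theorem~\ref{thmstab} by verifying that the operator $\mathcal{A} = \mathcal{A}_0 + \mathbf{C}$ on $H = L^2(\Omega;\mathbb{R}^N)$ fulfills Assumption~I. The decisive structural observation is that $\mathbf{C}$ acts as multiplication by the matrix field $(c_{ij}(\cdot))_{1\le i,j\le N}$ whose entries lie in $L^\infty(\Omega)$, so $\mathbf{C}\in\mathcal{L}(H)$ is \emph{bounded} and $D(\mathcal{A}) = D(\mathcal{A}_0) = \prod_{k=1}^N D(A_0^k)$. This reduces each clause of Assumption~I to the corresponding property of the diagonal operator $\mathcal{A}_0$ together with a standard bounded-perturbation argument.

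First I would check self-adjointness~(i) and the upper bound~(ii). Since each $A_0^k$ is self-adjoint on $L^2(\Omega)$ as in Subsection~\ref{sec4.1}, the diagonal operator $\mathcal{A}_0 = \mathrm{diag}(A_0^1,\ldots,A_0^N)$ is self-adjoint on $H$; the hypothesis $c_{ij}=c_{ji}$ makes $\mathbf{C}(x)$ a symmetric matrix for a.e.\ $x$, so $\mathbf{C}$ is a bounded self-adjoint operator, and adding it to $\mathcal{A}_0$ leaves the sum self-adjoint on the unchanged domain $D(\mathcal{A}_0)$. For~(ii), each $A_0^k$ is bounded above with some constant $\kappa_k\ge 0$, so $\langle \mathcal{A}_0\mathbf{u},\mathbf{u}\rangle \le (\max_k \kappa_k)\|\mathbf{u}\|^2$, and combined with $\langle \mathbf{C}\mathbf{u},\mathbf{u}\rangle \le \|\mathbf{C}\|_{\mathcal{L}(H)}\|\mathbf{u}\|^2$ this shows $\mathcal{A}$ is bounded above with $\kappa = \max_k\kappa_k + \|\mathbf{C}\|_{\mathcal{L}(H)}$.

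For the compact resolvent~(iii), I would invoke that each embedding $D(A_0^k)=H^2(\Omega)\cap H_0^1(\Omega)\hookrightarrow L^2(\Omega)$ is compact, whence the finite-product embedding $D(\mathcal{A}_0)\hookrightarrow H$ is compact and $\mathcal{A}_0$ has compact resolvent. As $\mathbf{C}$ is bounded, $D(\mathcal{A})=D(\mathcal{A}_0)$ carries an equivalent graph norm, so $D(\mathcal{A})\hookrightarrow H$ is compact and $\mathcal{A}$ has compact resolvent. With Assumption~I in hand, Theorem~\ref{thmstab} applies to $A=\mathcal{A}$ and gives the estimate~\eqref{hstab}; the displayed form of the corollary then follows by routinely bounding the factor $\|\mathbf{u}(T)\|^{2-2\theta}$ via $\|\mathbf{u}(T)\|\le K\|\mathbf{u}(0)\|\le KR$ (a consequence of the bounds in the proof of Theorem~\ref{thm1}) and factoring out $R^{2-2\theta}$, exactly as in Corollary~\ref{cor2}. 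No step constitutes a genuine obstacle; the only points requiring care are the two bounded-perturbation facts, namely that $\mathbf{C}$ preserves self-adjointness (through $c_{ij}=c_{ji}$) and the compactness of the resolvent (through its boundedness).
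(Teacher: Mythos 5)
Your proposal is correct and follows the same route as the paper, which simply asserts that $\mathcal{A}=\mathcal{A}_0+\mathbf{C}$ satisfies Assumption I and then invokes Theorem \ref{thmstab} exactly as in Corollary \ref{cor2}; your bounded symmetric-perturbation verification (self-adjointness via $c_{ij}=c_{ji}$, upper bound, compact resolvent preserved under the bounded perturbation $\mathbf{C}$) supplies precisely the details the paper leaves to the reader, and your reduction of \eqref{hstab} to the displayed form via $\|\mathbf{u}(T)\|\le K\|\mathbf{u}(0)\|\le KR$ is the intended one. The only cosmetic point is that in the Neumann case $D(A_0^k)$ is the subspace of $H^2(\Omega)$ with vanishing conormal derivative rather than $H^2(\Omega)\cap H^1_0(\Omega)$, which does not affect the compactness argument.
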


\subsection{Fractional degenerate equations}
Let $0 \leq \beta < 2$. We consider the fractional degenerate problem
\begin{equation} \label{deq}
\begin{cases} \partial_t^\alpha u=\left(x^{\beta} u_{x}\right)_{x}, & (t, x) \in (0,T) \times (0,1),\\
u(t, 1)=0, \qquad \begin{cases} u(t, 0)=0, &  \text{ if } \beta\in [0, 1),\\
\left(x^{\beta} u_{x}\right)(t, 0)=0, & \text{ if } \beta\in [1, 2),
\end{cases}\\
u(T, x)=u_{T}(x) & x \in(0, 1).
\end{cases}
\end{equation}
The state space is $H=L^2(0,1)$ and the governing operator is given by
$$
A u =\left(x^{\beta} u_{x}\right)_{x},
$$
with domain:\\
If $\beta\in [0, 1)$ (weakly degenerate), then
$$D(A)=\left\{u \in H_{\beta, 0}^{1}(0,1): x^{\beta} u_x \in H^{1}(0,1)\right\},$$
where
$$
H_{\beta, 0}^{1}(0,1)=\left\{u \in L^{2}(0,1): u \in AC([0,1]), x^{\frac{\beta}{2}} u_x \in L^{2}(0,1), u(0)=0, u(1)=0\right\}.
$$
If $\beta\in [1, 2)$ (strongly degenerate), then
$$D(A)=\left\{u \in H_{\beta, 0}^{1}(0,1): x^{\beta} u_x \in H^{1}(0,1)\right\},$$
where we changed the definition to
$$
H_{\beta, 0}^{1}(0,1)=\left\{u \in L^{2}(0,1): u \in AC((0,1]), x^{\frac{\beta}{2}} u_x \in L^{2}(0,1), u(1)=0 \text{ and } x^{\beta} u_x\in H^1(0,1)\right\}.
$$
$AC(I)$ denotes the space of absolutely continuous functions on the interval $I$.

In both cases, the operator $A$ satisfies Assumption I, see, e.g., \cite{CMP'98, CMV'05} for (i)-(ii) and \cite[Appendix]{ACF'06} for (iii). Consequently, we infer the following corollary.
\begin{corollary}
Let $u_0\in \mathcal{I}_R$ and let $u(t, x)$ be the corresponding solution to \eqref{deq}. Then there exists a constant $\theta \in (0,\frac{1}{2})$ such that
\begin{equation*}
    \|u(0,\cdot)\|_{L^2(0,1)} \le K R^{1-\theta}\sqrt{1 + \frac{2 \,  T^\alpha}{\alpha\Gamma(\alpha)}} \|u(T,\cdot)\|_{L^2(0,1)}^\theta.
\end{equation*}
\end{corollary}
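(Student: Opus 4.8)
The plan is to treat this corollary exactly as Corollary~\ref{cor2}: the whole matter reduces to verifying that the degenerate operator $Au=(x^\beta u_x)_x$, equipped with the domain $D(A)$ specified above, satisfies Assumption~I on $H=L^2(0,1)$; once this is done, Theorem~\ref{thmstab} applies word for word and the stated inequality follows by elementary bookkeeping. Hence the only substantive work is the verification of (i)--(iii) for this operator.

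For (ii) I would integrate by parts. For $u\in D(A)$,
\[
\langle Au,u\rangle=\int_0^1 (x^\beta u_x)_x\,u\,\d x=\bigl[x^\beta u_x\,u\bigr]_0^1-\int_0^1 x^\beta|u_x|^2\,\d x=-\int_0^1 x^\beta|u_x|^2\,\d x\le 0,
\]
so $A$ is bounded above with $\kappa=0$; the boundary term vanishes at $x=1$ because $u(1)=0$, and at $x=0$ because $u(0)=0$ when $\beta\in[0,1)$ and $(x^\beta u_x)(0)=0$ when $\beta\in[1,2)$. Moreover $\langle Au,u\rangle=0$ forces $u_x=0$ and hence, with $u(1)=0$, $u\equiv0$, so every eigenvalue $\lambda_n$ is strictly positive. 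For (i) I would invoke the self-adjointness of $A$ in the weighted Sobolev-space framework appropriate to each degeneracy regime, and for (iii) the compactness of the embedding $H^1_{\beta,0}(0,1)\hookrightarrow L^2(0,1)$, which gives a compact resolvent; both are classical and I would import them from \cite{CMP'98, CMV'05} and \cite[Appendix]{ACF'06}.

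With Assumption~I secured, Theorem~\ref{thmstab} yields, for $u_0\in\mathcal{I}_R$,
\[
\|u(0)\|\le\|u(T)\|^{\theta}\sqrt{\|u(T)\|^{2-2\theta}+KR^{2-2\theta}\tfrac{2T^\alpha}{\alpha\Gamma(\alpha)}}.
\]
Since $\kappa=0$, Theorem~\ref{thm1} gives $K=1$ and all $E_{\alpha,1}(-\lambda_n t^\alpha)\in(0,1]$, so the solution map is a contraction and $\|u(T)\|\le\|u(0)\|\le R$. Replacing $\|u(T)\|^{2-2\theta}$ by $R^{2-2\theta}$ under the radical, factoring out $R^{2-2\theta}$, and using $\sqrt{1+Ka}\le K\sqrt{1+a}$ for $K\ge1$ converts this into the advertised bound $\|u(0,\cdot)\|_{L^2(0,1)}\le KR^{1-\theta}\sqrt{1+\tfrac{2T^\alpha}{\alpha\Gamma(\alpha)}}\,\|u(T,\cdot)\|_{L^2(0,1)}^{\theta}$.

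The main obstacle is not the final estimate but the verification of (i) together with the correct behaviour at the degenerate endpoint $x=0$: one must guarantee that $A$ is genuinely self-adjoint rather than merely symmetric, and that the boundary contribution in the integration by parts truly vanishes, which hinges delicately on whether $\beta<1$ or $\beta\ge1$ and on Hardy-type trace estimates in $H^1_{\beta,0}(0,1)$. This is precisely the content of the degenerate-operator theory cited above, so once those results are in place the remaining steps are routine.
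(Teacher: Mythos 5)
Your proposal is correct and follows essentially the same route as the paper: the paper likewise reduces the corollary to checking Assumption I for the degenerate operator $(x^\beta u_x)_x$ (citing \cite{CMP'98, CMV'05} for (i)--(ii) and \cite[Appendix]{ACF'06} for (iii)) and then invokes Theorem \ref{thmstab}. Your extra details --- the explicit integration by parts for the upper bound with $\kappa=0$ and the bookkeeping converting the bound of Theorem \ref{thmstab} into the stated form via $\|u(T)\|\le\|u(0)\|\le R$ --- are sound and merely make explicit what the paper leaves implicit.
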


\subsection{Fractional dynamic boundary conditions}
Let us set $\Gamma:=\partial \Omega$. We consider the backward parabolic system with dynamic boundary conditions
\begin{empheq}[left = \empheqlbrace]{alignat=2}\label{dy1}
\begin{aligned}
&\partial_{t}^\alpha u(t,x)=\Delta u(t,x), && \qquad\text { in } (0, T)\times \Omega, \\
&\partial_{t}^\alpha u_{\Gamma}(t,x) = \Delta_{\Gamma} u_{\Gamma}(t,x) - \partial_{\nu}u(t,x), && \qquad\text { on } (0, T)\times \Gamma, \\
& u_{\Gamma}(t,x) = u_{|\Gamma}(t,x), &&\qquad\text{ on } (0, T)\times \Gamma, \\
& \left(u(T, \cdot),u_{\Gamma}(T, \cdot)\right)=\left(u_T,u_{T,\Gamma}\right), && \qquad \text{ on } \Omega\times\Gamma,
\end{aligned}
\end{empheq}
where $\Delta_\Gamma$ is the the Laplace-Beltrami operator on $\Gamma$. The system \eqref{dy1} can be written in the state space
$\mathbb{L}^2:=L^2(\Omega) \times L^2(\Gamma)$ as:
$$
\left\{\begin{array}{l}
\hspace{-0.2cm}\partial_{t}^\alpha \mathbf{U}(t)=\mathbf{A} \mathbf{U}(t), \qquad t \in (0,T), \\
\hspace{-0.2cm} \mathbf{U}(T)=\mathbf{U}_T,
\end{array}\right.
$$
where $\mathbf{U}:=\left(u, u_{\Gamma}\right)$, $\mathbf{U}_T:=\left(u_T, u_{T, \Gamma}\right)$, and the linear operator $\mathbf{A}: D(\mathbf{A}) \subset \mathbb{L}^2 \rightarrow \mathbb{L}^2$ is given by
$$
\mathbf{A}=\left(\begin{array}{cc}
\Delta & 0 \\
-\partial_\nu & \Delta_{\Gamma}
\end{array}\right), \qquad D(\mathbf{A})=\mathbb{H}^2:=\{(u,u_\Gamma)\in H^2(\Omega) \times H^2(\Gamma) : u_{|\Gamma}=u_\Gamma\}.
$$
In \cite{MMS'17}, it has been proven that $\mathbf{A}$ is self-adjoint and dissipative. Since $\mathbb{H}^{2} \hookrightarrow H^{2}(\Omega) \times H^{2}(\Gamma)$ is continuous, then $\mathbb{H}^{2} \hookrightarrow \mathbb{L}^2$ is compact. By \cite[Proposition 4.25]{EN'00}, we deduce that $\mathbf{A}$ has compact resolvent. Therefore, the operator $\mathbf{A}$ satisfies Assumption I.
\begin{corollary}\label{dycor}
Let $\mathbf{U}_0\in \mathcal{I}_R$ and let $\mathbf{U}(t)$ be the corresponding solution to \eqref{dy1}. Then there exists a constant $\theta \in (0,\frac{1}{2})$ such that
\begin{equation*}
    \|\mathbf{U}(0)\|_{\mathbb{L}^2} \le K R^{1-\theta}\sqrt{1 + \frac{2 \,  T^\alpha}{\alpha\Gamma(\alpha)}} \|\mathbf{U}(T)\|_{\mathbb{L}^2}^\theta.
\end{equation*}
\end{corollary}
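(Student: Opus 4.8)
The plan is to read \eqref{dy1} as an instance of the abstract problem \eqref{eq1} with $H=\mathbb{L}^2$ and $A=\mathbf{A}$, and then to apply Theorem \ref{thmstab} verbatim. The verification of the hypotheses has already been carried out in the paragraph preceding the statement: $\mathbf{A}$ is self-adjoint and dissipative by \cite{MMS'17}, so it is bounded above with $\kappa=0$ (indeed $\langle\mathbf{A}\mathbf{U},\mathbf{U}\rangle_{\mathbb{L}^2}\le 0$), and the compact embedding $\mathbb{H}^2\hookrightarrow\mathbb{L}^2$ together with \cite[Proposition 4.25]{EN'00} furnishes the compact resolvent. Thus $\mathbf{A}$ satisfies Assumption I, and since $\mathbf{U}_0\in\mathcal{I}_R$, Theorem \ref{thmstab} applies directly.

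Invoking Theorem \ref{thmstab} yields a constant $\theta\in(0,\frac{1}{2})$ for which
\begin{equation*}
\|\mathbf{U}(0)\|_{\mathbb{L}^2}\le\|\mathbf{U}(T)\|_{\mathbb{L}^2}^{\theta}
\sqrt{\|\mathbf{U}(T)\|_{\mathbb{L}^2}^{2-2\theta}
+KR^{2-2\theta}\frac{2T^\alpha}{\alpha\Gamma(\alpha)}}.
\end{equation*}
It then remains to pass from this radical form to the stated estimate. Because $\kappa=0$, Theorem \ref{thm1} holds with $K=1$, and evaluating the spectral representation of the solution gives the non-increase of the norm, hence $\|\mathbf{U}(T)\|_{\mathbb{L}^2}\le\|\mathbf{U}(0)\|_{\mathbb{L}^2}\le R$. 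Consequently $\|\mathbf{U}(T)\|_{\mathbb{L}^2}^{2-2\theta}\le R^{2-2\theta}$, and using $K\ge 1$ one bounds the radicand by $K^2R^{2-2\theta}\bigl(1+\frac{2T^\alpha}{\alpha\Gamma(\alpha)}\bigr)$; extracting the square root produces $KR^{1-\theta}\sqrt{1+\frac{2T^\alpha}{\alpha\Gamma(\alpha)}}$, which multiplied by $\|\mathbf{U}(T)\|_{\mathbb{L}^2}^{\theta}$ is exactly the claimed bound.

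I do not expect any genuine obstacle here: the analytic substance is entirely absorbed into the verification of Assumption I for $\mathbf{A}$ and into Theorem \ref{thmstab}. The single point that deserves an explicit line is the a priori inequality $\|\mathbf{U}(T)\|_{\mathbb{L}^2}\le R$, whose justification uses dissipativity (equivalently $\kappa=0$, whence $K=1$) to ensure that the $\mathbb{L}^2$-norm of the solution does not increase in time; granting this, the reduction of the radical to the compact form is purely algebraic.
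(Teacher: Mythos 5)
Your proposal is correct and follows essentially the same route as the paper: verify Assumption I for $\mathbf{A}$ (self-adjointness and dissipativity from \cite{MMS'17}, compact resolvent from the compact embedding $\mathbb{H}^2\hookrightarrow\mathbb{L}^2$) and then apply Theorem \ref{thmstab}. The only step the paper leaves implicit --- passing from the radical form of \eqref{hstab} to the product form via $\|\mathbf{U}(T)\|_{\mathbb{L}^2}\le\|\mathbf{U}(0)\|_{\mathbb{L}^2}\le R$, which you justify correctly using dissipativity ($\kappa=0$, so the norm is non-increasing) and $K\ge 1$ --- is exactly the computation you supply.
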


\section*{Acknowledgment}
M.\! Yamamoto is supported by Grant-in-Aid for Scientific Research (A) 
20H00117 and Grant-in-Aid for Challenging Research (Pioneering) 21K18142, JSPS.


\begin{thebibliography}{99}

\bibitem{AG'92}
E. E. Adams and L. W. Gelhar, Field study of dispersion in a heterogeneous aquifer 2, \emph{Spatial moments analysis, Water Resources Res.}, \textbf{28} (1992) 3293--3307.

\bibitem{AN'63}
S. Agmon and L. Nirenberg, Properties of solutions of ordinary differential equations in Banach space, \emph{Comm. Pure Appl. Math.}, \textbf{16} (1963), 121--239.

\bibitem{ACM'21}
E. M. Ait Ben Hassi, S. E. Chorfi and L. Maniar, An inverse problem of radiative potentials and initial temperatures in parabolic equations with dynamic boundary conditions, \emph{J. Inverse Ill-Posed Probl.}, \textbf{30} (2022), 363--378.

\bibitem{ACM'21''}
E. M. Ait Ben Hassi, S. E. Chorfi and L. Maniar, Inverse problems for general parabolic systems and application to Ornstein-Uhlenbeck equation, \, arXiv: 2110.01321 (2022).

\bibitem{ACF'06}
F. Alabau-Boussouira, P. Cannarsa and G. Fragnelli, Carleman estimates for degenerate parabolic operators with applications to null controllability, \emph{J. Evol. Equ.}, \textbf{6} (2006), 161--204.

\bibitem{CMP'98}
M. Campiti, G. Metafune and D. Pallara, Degenerate self-adjoint evolution equations on the unit interval, \emph{Semigroup Forum}, \textbf{57} (1998), 1--36.

\bibitem{CMV'05}
P. Cannarsa, P. Martinez and J. Vancostenoble, Null controllability of degenerate heat equations, \emph{Adv. Differ. Equ.}, \textbf{10} (2005), 153--90

\bibitem{CY'00}
J. Cheng and M. Yamamoto, One new strategy for a priori choice of regularizing parameters in Tikhonov's regularization, \emph{Inverse Probl.}, \textbf{16} (2000), L31--L38.

\bibitem{EN'00}
K.-J. Engel and R. Nagel, \emph{One-Parameter Semigroups for Linear Evolution Equations}, Grad. Texts in Math., vol. 194, Springer, New York, 2000.

\bibitem{FLY'20}
G. Floridia, Z. Li and M. Yamamoto, Well-posedness for the backward problems in time for general time-fractional diffusion equation, \emph{Atti Accad. Naz. Lincei Cl. Sci. Fis. Mat. Natur.}, \textbf{31} (2020), 593--610

\bibitem{GM'97}
R. Gorenflo and F. Mainardi, Fractional calculus: Integral and differential equations of fractional order, in: A. Carpinteri, F. Mainardi (Eds.), Fractals and Fractional Calculus in Continuum Mechanics, Springer-Verlag, New York, 1997, 223--276.

\bibitem{Is'17}
V. Isakov, \emph{Inverse Problems for Partial Differential Equations}, third edition, Springer, New York, 2017.

\bibitem{KP'60}
S. G. Krein and O. I. Prozorovskaya, Analytic semigroups and incorrect problems for evolutionary equations, \emph{Dokl. Akad. Nauk SSSR}, \textbf{133} (1960), 277--280.

\bibitem{LYZ'09}
J. Li, M. Yamamoto and J. Zou, Conditional stability and numerical reconstruction of initial temperature, \emph{Commun. Pure Appl. Anal.}, \textbf{8} (2009), 
361--382.

\bibitem{LY'10}
J. J. Liu and M. Yamamoto, A backward problem for the time-fractional diffusion equation, \emph{Appl. Anal.}, \textbf{89} (2010), 1769--1788.

\bibitem{MMS'17}
L. Maniar, M. Meyries and R. Schnaubelt, Null controllability for parabolic equations with dynamic boundary conditions, \emph{Evol. Equat. and Cont. Theo.}, \textbf{6} (2017), 381--407.

\bibitem{MS'01}
C. Martinez and M. Sanz, \emph{The Theory of Fractional Powers of Operators}, North-Holland Mathematics Studies, Elsevier Science, Amsterdam, 2001.

\bibitem{NP'18}
C. P. Niculescu and L. -E. Persson,
\emph{Convex Functions and Their Applications}, second edition, Springer, 
Cham Switzerland, 2018.

\bibitem{OS'74}
K. B. Oldham and J. Spanier, \emph{The Fractional Calculus: Theory and 
Applications of Differentiation and Integration to Arbitrary Order}, Academic Press, New York, 1974.

\bibitem{Pa'75}
L. E. Payne, \emph{Improperly Posed Problems in Partial Differential Equations}, SIAM, Philadelphia, 1975.

\bibitem{Pa}
A. Pazy, \emph{Semigroups of Linear Operators and Applications to Partial 
Differential Equations}, Springer-Verlag, Berlin, 1983.

\bibitem{Pod'99}
I. Podlubny, \emph{Fractional Differential Equations}, Acad. Press, 
San Diego, 1999.

\bibitem{Po'48}
H. Pollard, The complete monotonic character of the Mittag-Leffler function $E_\alpha(-x)$, \emph{Bull. Amer. Math. Soc.}, \textbf{54} (1948), 1115--1116.

\bibitem{SY'11}
K. Sakamoto and M. Yamamoto, Initial value/boundary value problems for fractional diffusion-wave equations and applications to some inverse problems, \emph{J. Math. Anal. Appl.}, \textbf{382} (2011), 426--447.

\bibitem{ScV'12}
R. L. Schilling, R. Song and Z. Vondra\v cek, \emph{Bernstein Functions: Theory and Applications}, Berlin, Boston: De Gruyter, 2012.

\bibitem{Sc'96}
W. R. Schneider, Completely monotone generalized Mittag-Leffler functions, \emph{Expo. Math.}, \textbf{14} (1996), 3--16.

\bibitem{Tuan}
N. H. Tuan, T. Caraballo, T. B. Ngoc and Y. Zhou, 
Existence and regularity results for terminal value problem for nonlinear fractional wave equations, \emph{Nonlinearity}, \textbf{34} (2021), 1448--1502.

\bibitem{We}
T. Wei and Y. Zhang,
The backward problem for a time-fractional diffusion-wave equation in 
a bounded domain, \emph{Computers \& Mathematics with Applications}, 
\textbf{75} (2018), 3632--3648.

\bibitem{Wi'46}
D. V. Widder, \emph{The Laplace Transforms}, Princeton University Press, 
Princeton, 1946.

\bibitem{YZ'01}
M. Yamamoto and J. Zou, Simultaneous reconstruction of the initial temperature and heat radiative coefficient, \emph{Inverse Problems}, \textbf{17} (2001), 1181--1202.
    
\end{thebibliography}
\end{document}